\documentclass[reqno]{amsart}

\usepackage{amsmath}
\usepackage{amssymb}
\usepackage{amscd}
\usepackage{tikz-cd}
\usepackage{tikz}

\usetikzlibrary{positioning}

\begin{document}

\title{The Natural partial order on modules}

\author{T. Pakel}
\address{Tugba Pakel,  Department of Mathematics, Ankara University, Turkey}
\email{tpakel@ankara.edu.tr}

\author{T. P. Calci}
\address{Tugce Pekacar Calci,  Department of Mathematics, Ankara University, Turkey}
\email{tcalci@ankara.edu.tr}

\author{S. Halicioglu}
\address{Sait Hal\i c\i oglu,  Department of Mathematics, Ankara University, Turkey}
\email{halici@ankara.edu.tr}

\author{A. Harmanci}
\address{Abdullah Harmanci, Department of Mathematics, Hacettepe University,  Turkey}
\email{harmanci@hacettepe.edu.tr}

\author{B. Ungor}
\address{Burcu Ungor,  Department of Mathematics, Ankara University, Turkey}
\email{bungor@science.ankara.edu.tr}

\date{}
\newtheorem{thm}{Theorem}[section]
\newtheorem{lem}[thm]{Lemma}
\newtheorem{prop}[thm]{Proposition}
\newtheorem{cor}[thm]{Corollary}
\newtheorem{exs}[thm]{Examples}
\newtheorem{defn}[thm]{Definition}
\newtheorem{nota}{Notation}
\newtheorem{rem}[thm]{Remark}
\newtheorem{ex}[thm]{Example}
\newtheorem{que}[thm]{Question}

\begin{abstract}
The Mitsch order is already known as a natural partial order for semigroups and rings. The purpose of this paper is to further  study of the Mitsch order on modules by investigating basic properties via  endomorphism rings. And so this study also contribute to the results related to the orders on rings. As a module theoretic analog of the Mitsch order, we show that this order is a partial order on arbitrary modules. Among others, lattice properties of the Mitsch order and the relations between the Mitsch
order and the other well-known orders, such as, the minus order,
the Jones order, the direct sum order and the space pre-order on
modules are studied. In particular, we prove that the minus order is the Mitsch order and we supply an example to show that the converse does not hold in general.

 \vspace{2mm}

\noindent {\bf2010 MSC:}  06A06, 06F25, 06F99, 16B99

\noindent {\bf Key words:} Mitsch order, minus order, Jones order, direct sum order, space pre-order
\end{abstract}

\maketitle

\section{Introduction}
Throughout this paper $R$ denotes an associative ring with
identity $1_R$ and modules are unitary right $R$-modules. For a
module $M$, $S =$ End$_R(M)$ stands for the ring of all
right $R$-module endomorphisms of $M$. It is well known that $M$
is a left $S$-right $R$-bimodule. In this work, for the $(S,
R)$-bimodule $M$, $l_S(.)$ and $r_R(.)$ stand for the left
annihilator of a subset of $M$ in $S$ and the right annihilator of
a subset of $M$ in $R$, respectively. If the subset is a
singleton, say $\{m\}$, then we simply write $l_{S}(m)$ and
$r_{R}(m)$, respectively. For a ring $R$, $C(R)$ denotes
the center of $R$.

Let $\mathcal{S}$ be a semigroup and $a\in\mathcal{S}$. Any
solution $x=a^{-}$ to the equation $axa=a$ is called an \textit{inner generalized inverse} of $a$. If in addition $xax=x$, then
$x$ is called a \textit{reflexive inverse} of $a$ and denoted by
$a^+$. If such $a^{-}$ exists, then $a$ is called
\textit{regular}, and if every element in a semigroup
$\mathcal{S}$ is regular, then $\mathcal{S}$ is called a \textit{
regular semigroup}. Hartwig \cite{Hartwig} introduced the \textit{plus order} $\leq^{+}$ on regular semigroups using generalized
inverses. For a regular semigroup $\mathcal{S}$ and
$a,b\in\mathcal{S}$, we write
\begin{equation}
a\leq^{+}b\quad\text{if\quad}a^{+}a=a^{+}b\quad\text{and\quad}aa^{+}=
ba^{+}
\label{HartwigPlusDef}%
\end{equation}
for some reflexive inverse $a^{+}$ of $a$. It is shown that the
plus order is a partial order. On the other hand, consider
\begin{equation}
a\leq^{-}b\quad\text{if\quad}a^{-}a=a^{-}b\quad\text{and\quad}aa^{-}=ba^{-}
\label{HartwigMinusDef}%
\end{equation} for some inner generalized inverse $a^{-}$ of $a$.
The relation $\leq^{-}$ is usually called \textit{minus order}. It
is well known that definitions (\ref{HartwigPlusDef}) and
(\ref{HartwigMinusDef}) are equivalent. In \cite{HS}, the plus
partial order was renamed as minus partial order.

Let $S$ be any semigroup and $S^1$ denote the
set $S$ if  $S$ has an identity.  In \cite{Mitsch}, the Mitsch order was introduced for semigroups by Mitsch as follows: for $a,b\in S$, \begin{center}
$a \leq b$ if $a = xb = by$, $xa = a$ \end{center} for some $x, y \in S^1$. Then it is a partial order on $S$ which is called the
\textit{Mitsch order}.

Principally projective rings
were introduced by Hattori \cite{H} to study the torsion theory, that is, a ring is called
{\it left (resp. right) principally projective} if every principal left (resp. right) ideal is projective. This
is equivalent to the left (resp. right) annihilator of any element of the ring is generated
by an idempotent as a left (resp. right) ideal, i.e., the ring is {\it left (right) Rickart}. If a ring $R$ is both left and right Rickart, then it is said to be {\it Rickart}. 

Some partial orders are defined on Baer and Rickart  rings in the literature (see \cite{DjordjevicRakicMarovt}, \cite{M} and \cite{UHHM}). Let $R$ be a ring  and $a,b\in R$. Then we write $a\leq^{-}b$ if there
exist idempotent elements $p,q\in R$ such that
\begin{equation}
l_{R}(a)=R(1_R-p),  \quad r_{R}(a)=(1_R-q)R, \quad pa=pb, \quad
\text{and} \quad aq=bq. \label{def_minus_Rickart}
\end{equation}
It was proved in \cite{DjordjevicRakicMarovt} that this relation
$\leq^{-}$ is indeed a partial order when $R$ is a Rickart ring. In \cite{Drazin}, the Mitsch order defined in a ring in the following way: For any ring $R$, a natural partial order $\mathcal M$ on $R$ defined by $x \mathcal M y$ for given $x,y \in R$ if either $x=y$ or there exist $p,q \in R$ such that $px=py=x=xq=yq$. In \cite{DKM}, it is shown that  the minus order and the Mitsch order are the same on Rickart rings.

Let $M$ be a module. Then an element $m \in M$ is called
({\it Zelmanowitz}) {\it regular} if $m=m\varphi (m) \equiv \hspace{0,1cm} m\varphi m$ for
some $\varphi\in M^*$ where $M^* =$ Hom$_R(M, R)$ denotes the dual
of $M$. For a ring $R$, let $a\in R$ be a regular element (in the sense of von Neumann), i.e., there exists $a^- \in R$ such that $a=aa^-a$. Define the map $\varphi:R\rightarrow R$ with
$\varphi (r)=a^-r, r\in R$. Then $\varphi \in R^* =$ End$_R(R)$. We have $a\varphi a=a\varphi(a)=aa^-a=a$ which yields that $a$ is regular in $R_R$ (in the
sense of Zelmanowitz, see also \cite{ZEL}). Let now $R$ be a ring and suppose $a$ is a regular element in $R_R$ (in the sense of Zelmanowitz), i.e., there exists  $\varphi\in R^* $ such that $a=a\varphi a$. Define $a^-=\varphi(1_R)$. Then  $a=a\varphi a=a\varphi(1_R)a=a\varphi(1_Ra)=a\varphi(a)=a\varphi a=a.$
We may conclude that $a \in R$ is regular if and only if $a$ is regular in
$R_R$ (or, similarly, in the left-$R$ module $_RR$). A module $M$ is called \textit{regular} (in the
sense of Zelmanowitz) if every element of $M$ is
regular.

Recently, some of the present authors study the minus order in a more general setting. The notion of the minus order was extended to modules using their endomorphism rings in \cite{UHHM-comm}. Let $M$ be a module and $m_1$, $m_2 \in M$. Then $m_1 \leq^- m_2$ if there exists $\varphi \in M^*$ such that $m_1=m_1\varphi m_1$, $m_1\varphi=m_2 \varphi$ and $\varphi m_1= \varphi m_2$.  They called the relation $\leq^-$ the {\it minus order} on $M$, and proved that it is a partial order on regular modules. In the same work, the authors also defined the Mitsch order on modules, that is, $m_1\leq_M m_2$ if there exist $f \in S$ and
$a \in R$ such that $m_1= m_2a= fm_2$ and $m_1= fm_1$. It is showed in \cite{UHHM-comm} that the minus order and the Mitsch order coincide on regular modules. 

Although the minus order on various algebraic structures has been an area of intense research in the past few years, the main properties of the Mitsch order is not studied in detail. Motivated by the aforementioned works on the minus  order, the goal of this paper is to investigate properties of the Mitsch order on arbitrary modules. The paper is organized as follows: In Section 2, we show that the
Mitsch relation is a partial order on modules, and give some characterizations of this partial order in various ways.
We determine the way how to define the Mitsch order from a module to its factor modules. In Section 3, the relations between the Mitsch
order and the other well-known orders on modules are studied and the following diagram is obtained:\\
\begin{tikzcd}[column sep=0.8em,row sep=1em,style = {font =\normalsize}]
  &&&&&\makebox{Direct Sum Order} \arrow[dl,leftarrow, shorten=0.2cm,end anchor={[xshift=-0.5ex]},end anchor={[yshift=-1.5ex]north east}]  \\
  \makebox{Minus Partial Order} \arrow[rr,rightarrow] && \makebox{Jones Order} \arrow[rr,rightarrow]  && \makebox{Mitsch Order}\\
  &&&&&\makebox{Space Pre-Order}\arrow[ul,leftarrow,shorten <=0.4cm,shorten >=0.28cm,end anchor={[xshift=-0.6ex]},end anchor={[yshift=-1.5ex]north east}]
\end{tikzcd}\\
We supply examples to show that all implications in this diagram are strict. Section 4 deals with lattice properties of the Mitsch order on modules. The maximal elements of modules with respect to the Mitsch order are determined. Finally, we focus on the compatibility of the  Mitsch order  with the addition and multiplication.

\section{Basic Properties of the Mitsch Order}

In \cite{Mitsch}, Mitsch generalized Hartwig's definition of the
minus partial order to arbitrary semigroups. Suppose $a,b$ are two elements of an arbitrary semigroup
$\mathcal{S}$. Then we write
\begin{equation}
a\leq_{M}b\text{\quad if\quad}a=xb=by \text{ and }xa=a \label{MitschDefMinus}%
\end{equation}
for some elements $x,y\in\mathcal{S}^{1}$, where $S^1$ denotes the
set $S$ if  $S$ has an identity. Mitsch proved that
$\leq_{M}$ is indeed a partial order for any semigroup
$\mathcal{S}$ and that for $a,b\in\mathcal{S}$,
\begin{equation} \label{MitschMinusEquiv}
a\leq_{M}b\text{\quad if and only if\quad}a=xb=by\text{ and
}xa=ay=a
\end{equation}
for some elements $x,y\in\mathcal{S}^{1}$. One can easily
generalize this definition to a ring $R$ as follows.
\begin{defn}
\label{Definition_Mitsch_on rings} \rm Let $R$ be a ring and $a,b \in
R$. We write $a\leq _{M}b$ if there exist $x,y \in R$ such that $a
=xb=by$ and $a=xa$.
\end{defn}

It can be easily seen that the Mitsch order is a partial order on
rings.

\begin{prop} \label{mitschdenklik} Let $M$ be a module and $m_1, m_2\in M$. Then the following statements are equivalent.
\begin{enumerate}
    \item There exist $f \in S$ and $a \in R$ such that $m_1 =m_2a=fm_2$ and $m_1=fm_1$.
    \item There exist $f \in S$ and $a \in R$ such that $m_1 =m_2a=fm_2$ and $m_1=m_1a$.
    \item There exist $f \in S$ and $a \in R$ such that $m_1 =m_2a=fm_2$ and $m_1=fm_1=m_1a$.
\end{enumerate}\end{prop}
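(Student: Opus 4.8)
The plan is to prove the two nontrivial implications $(1)\Rightarrow(3)$ and $(2)\Rightarrow(3)$, since $(3)\Rightarrow(1)$ and $(3)\Rightarrow(2)$ are immediate: statement $(3)$ already contains every equation appearing in $(1)$ and in $(2)$. Thus in each direction it suffices to keep the witnesses $f$ and $a$ supplied by the hypothesis and to recover the single missing identity. The whole argument will rest on the compatibility of the two actions on $M$, namely that every $f\in S=\mathrm{End}_R(M)$ is right $R$-linear, so that $f(m_2 a)=(f m_2)a$ for the given $a\in R$. This is the only structural fact I expect to use, and it plays the role that cancellation plays in Mitsch's semigroup argument.

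For $(1)\Rightarrow(3)$ I start from $m_1=m_2a=fm_2$ and $m_1=fm_1$, and I aim to produce the missing equation $m_1=m_1a$. The computation I have in mind is
\[
m_1 a=(fm_2)a=f(m_2a)=fm_1=m_1,
\]
where the first equality uses $m_1=fm_2$, the second uses the $R$-linearity of $f$, the third uses $m_2a=m_1$, and the last uses $m_1=fm_1$. Since this recovers $m_1=m_1a$ with the \emph{same} $f$ and $a$, it yields together with the hypotheses exactly the three equalities required in $(3)$.

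For $(2)\Rightarrow(3)$ I start from $m_1=m_2a=fm_2$ and $m_1=m_1a$, and now the missing equation is $m_1=fm_1$. The symmetric computation
\[
fm_1=f(m_2a)=(fm_2)a=m_1a=m_1
\]
does the job, reading the linearity of $f$ in the other direction and using $m_1a=m_1$ at the last step. Again the same $f$ and $a$ serve, so $(3)$ follows, and the three statements are equivalent.

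I do not anticipate a genuine obstacle: this is essentially the module analogue of Mitsch's equivalence (\ref{MitschMinusEquiv}), and the key point is simply that one need not modify the witnesses. The only thing to watch is that the step which would be a cancellation in a semigroup is here replaced by passing $a$ through $f$ via $R$-linearity; once that is in place both directions are one-line computations. I would close by remarking that the proposition shows the defining condition of the Mitsch order on $M$ may be phrased using either of the one-sided fixed-point conditions $m_1=fm_1$ or $m_1=m_1a$, or with both imposed simultaneously.
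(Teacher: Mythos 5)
Your proposal is correct and uses essentially the same argument as the paper: both hinge on the single computation $f m_2 a = (fm_2)a = f(m_2a)$ (the bimodule compatibility) to transfer between $m_1 = fm_1$ and $m_1 = m_1a$, the paper merely organizing it as a cycle $(1)\Rightarrow(2)\Rightarrow(3)\Rightarrow(1)$ rather than your two implications into $(3)$.
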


\begin{proof}
(1) $\Rightarrow$ (2) If there exist $f \in S$ and $a \in R$ such that $m_1 =m_2a=fm_2$ and $m_1=fm_1$, then $m_1=fm_2a=m_1a$. \\
(2) $\Rightarrow$ (3) If there exist $f \in S$ and $a \in R$ such that $m_1 =m_2a=fm_2$ and $m_1=m_1a$, then $m_1=m_1a=fm_2a=fm_1$.\\
(3) $\Rightarrow$ (1) It is obvious.
\end{proof}

Following the above ideas, we now consider the relation
$\leq_{M}$ on  a module $M$ which was defined in \cite{UHHM-comm}. 

\begin{defn}
\label{Definition_Mitsch} \rm Let $M$ be a module and $m_1,m_2 \in M$.
We write $m_1\leq _{M}m_2$ if one of the equivalent conditions in
Proposition \ref{mitschdenklik} is satisfied. We call the relation $\leq_{M}$ the \it{Mitsch order} on $M$.
\end{defn}

In the sequel of this section, we examine  the main properties of the Mitsch order on modules in detail.

\begin{thm}\label{mitsch} Let $M$ be a module. The Mitsch order is a partial order
on $M$.
\end{thm}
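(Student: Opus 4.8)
The plan is to verify the three defining properties of a partial order—reflexivity, antisymmetry, and transitivity—for the relation $\leq_M$, throughout exploiting the $(S,R)$-bimodule structure of $M$, namely the identity $f(ma)=(fm)a$ for all $f\in S$, $m\in M$, $a\in R$. This compatibility between the left $S$-action and the right $R$-action is the structural feature that drives the whole argument, and it is the identity I expect to lean on repeatedly. Reflexivity is immediate: given $m\in M$, take $f=\mathrm{id}_M\in S$ and $a=1_R\in R$, so that $m=m\cdot 1_R=\mathrm{id}_M(m)$ and $m=\mathrm{id}_M(m)$, whence $m\leq_M m$.

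For antisymmetry, suppose $m_1\leq_M m_2$ and $m_2\leq_M m_1$. Using Proposition \ref{mitschdenklik}, I fix $f\in S$, $a\in R$ with $m_1=m_2a=fm_2$, and $g\in S$, $b\in R$ with $m_2=m_1b=gm_1$ and $gm_2=m_2$. The argument reduces to a single substitution $m_2=gm_1=g(m_2a)=(gm_2)a=m_2a=m_1$, where the third equality is precisely the bimodule identity and the final two use $gm_2=m_2$ and $m_1=m_2a$. Thus $m_1=m_2$.

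For transitivity, suppose $m_1\leq_M m_2$ and $m_2\leq_M m_3$, and invoke the strongest form of Proposition \ref{mitschdenklik}(3) to choose $f\in S$, $a\in R$ with $m_1=m_2a=fm_2$ and $m_1=fm_1=m_1a$, together with $g\in S$, $b\in R$ with $m_2=m_3b=gm_3$ and $m_2=gm_2=m_2b$. The natural candidates are $h=fg\in S$ and $c=ba\in R$. The two ``outer'' conditions are routine: $m_3c=(m_3b)a=m_2a=m_1$ and $hm_3=f(gm_3)=fm_2=m_1$.

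The step I expect to be the crux is the remaining condition $hm_1=m_1$, since one might naively worry that $g$ does not fix $m_1$. This is resolved, again, by the bimodule identity: from $m_1=m_2a$ and $gm_2=m_2$ one gets $gm_1=g(m_2a)=(gm_2)a=m_2a=m_1$, and then $hm_1=f(gm_1)=fm_1=m_1$. Hence $m_1=m_3c=hm_3$ with $m_1=hm_1$, so $m_1\leq_M m_3$. Reflexivity, antisymmetry, and transitivity together show that $\leq_M$ is a partial order on $M$. The only genuinely delicate points are recognizing that $R$-linearity of the endomorphisms is what lets the left action ``pass through'' the right action, and choosing the composites $h=fg$ and $c=ba$ in the correct order so that these passages line up.
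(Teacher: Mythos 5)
Your proof is correct and follows essentially the same route as the paper: reflexivity via $f=\mathrm{id}_M$, $a=1_R$; antisymmetry via the substitution $m_2=gm_1=g(m_2a)=(gm_2)a=m_2a=m_1$ (the paper's chain $m_1=m_2a=gm_2a=gm_1=m_2$ read in the other direction); and transitivity via the same composites $fg\in S$, $ba\in R$, with the same key step of passing $g$ through $m_2a$ using $gm_2=m_2$.
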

\begin{proof} \textbf{Reflexivity}: Obvious.\\
\textbf{Antisymmetry:} Let $m_1, m_2\in M$ with $m_1\leq_{M} m_2$
and $m_2\leq_{M} m_1$. Then there exist $f,g \in
S$ and $a,b \in R$
such that
$$ m_1 =m_2a=fm_2,~~m_1=fm_1 $$ and
$$ m_2 =m_1b=gm_1,~~m_2=gm_2.$$
Hence $m_1=m_2a=gm_2a=gm_1=m_2$.\\
\textbf{Transitivity:} Let $m_1, m_2, m_3\in M$ with $m_1\leq_{M} m_2$
and $m_2\leq_{M} m_3$. Then there exist $f,g \in
S$ and $a,b \in R$
such that
$$ m_1 =m_2a=fm_2,~~m_1=fm_1 $$ and
$$ m_2 =m_3b=gm_3,~~m_2=gm_2.$$
Hence $m_1=m_3(ba)=(fg)m_3$ and $m_1=fm_1=fm_2a=fgm_2a=(fg)m_1$ and so $m_1\leq_{M} m_3$. Therefore the Mitsch order is a partial order on $M$.
\end{proof}

In the next result, we show that there are many different ways to
express the Mitsch order.

\begin{thm} \label{nlidenklikler}
Let $M$ be a module and $m_1,m_2 \in M$. The following are
equivalent.
\begin{enumerate}
    \item $m_1\leq_{M} m_2$.
    \item There exist $f \in
S$ and $a \in R$ such that $m_1 =m_2a^n=f^nm_2$ and $m_1=f^nm_1$,
for any $n\in \mathbb{Z}^+$.
    \item There exist $f \in
S$ and $a \in R$ such that $m_1 =m_2a^n=f^nm_2$ and $m_1=m_1a^n$,
for any $n\in \mathbb{Z}^+$.
    \item There exist $f \in
S$ and $a \in R$ such that $m_1 =m_2a^n=f^nm_2$ and
$m_1=f^nm_1=m_1a^n$, for any $n\in \mathbb{Z}^+$.
\end{enumerate}
\end{thm}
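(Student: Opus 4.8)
The plan is to reduce the whole statement to the single non-trivial implication (1) $\Rightarrow$ (4), since every other implication is essentially free. First I would observe that (4) $\Rightarrow$ (2) and (4) $\Rightarrow$ (3) hold by simply retaining part of the conclusion of (4) (discard the relation $m_1=m_1a^n$, respectively $m_1=f^nm_1$). For the converse directions I would specialize to $n=1$: statement (2) at $n=1$ is exactly condition (1) of Proposition \ref{mitschdenklik}, statement (3) at $n=1$ is its condition (2), and statement (4) at $n=1$ is its condition (3). By Definition \ref{Definition_Mitsch}, each of these is precisely $m_1\leq_M m_2$, so (2) $\Rightarrow$ (1), (3) $\Rightarrow$ (1) and (4) $\Rightarrow$ (1) are all immediate. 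Hence the cycles (1) $\Rightarrow$ (4) $\Rightarrow$ (3) $\Rightarrow$ (1) and (4) $\Rightarrow$ (2) $\Rightarrow$ (1) close up all four conditions.

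For the core implication (1) $\Rightarrow$ (4), I would invoke the strongest of the equivalent forms furnished by Proposition \ref{mitschdenklik}: assuming $m_1\leq_M m_2$, there exist $f\in S$ and $a\in R$ with $m_1=m_2a=fm_2$ and, simultaneously, $m_1=fm_1=m_1a$. Securing both ``idempotent-type'' relations $fm_1=m_1$ and $m_1a=m_1$ before starting is the key preparatory move, because it lets the $f$-side and the $a$-side be handled by independent inductions.

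The two inductions themselves are routine. From $fm_1=m_1$ one gets $f^nm_1=m_1$ for every $n$ by a one-line induction, since $f^{n+1}m_1=f(f^nm_1)=fm_1=m_1$; feeding this into $m_1=fm_2$ gives $f^nm_2=f^{n-1}(fm_2)=f^{n-1}m_1=m_1$. Symmetrically, from $m_1a=m_1$ one obtains $m_1a^n=m_1$, and then $m_2a^n=(m_2a)a^{n-1}=m_1a^{n-1}=m_1$. Assembling these equalities yields $m_1=m_2a^n=f^nm_2$ and $m_1=f^nm_1=m_1a^n$ for all $n\in\mathbb{Z}^+$, which is exactly statement (4).

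I do not expect a genuine obstacle; the only point needing care is to start from condition (3) of Proposition \ref{mitschdenklik} rather than from the bare definition, so that both $fm_1=m_1$ and $m_1a=m_1$ are in hand before the induction begins. A slightly longer alternative would push all of $m_1=m_2a^n=f^nm_2$ and $m_1=f^nm_1=m_1a^n$ through a single simultaneous induction on $n$, but separating the $f$-side from the $a$-side keeps each inductive step to one application of the module action.
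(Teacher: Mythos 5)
Your proof is correct and rests on essentially the same argument as the paper's: the same one-line inductions iterating $fm_1=m_1$ and $m_1a=m_1$, anchored in Proposition \ref{mitschdenklik}. The only difference is organizational — you prove the strongest implication (1) $\Rightarrow$ (4) once and recover (2) and (3) by discarding conclusions, whereas the paper proves (1) $\Leftrightarrow$ (2) directly and declares the other two equivalences ``similar''; your arrangement is slightly more economical and has the small merit of making explicit the relation $m_1a=m_1$ (via condition (3) of Proposition \ref{mitschdenklik}) that the paper's chain $m_1=m_2a=m_2aa=\cdots=m_2a^n$ uses implicitly.
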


\begin{proof}
(1) $\Leftrightarrow$ (2) Let $m_1\leq_{M} m_2$, then there exist $f \in S$ and $a \in R$ such that $m_1=m_2a=fm_2$ and $m_1=fm_1$. We show that $m_1 =m_2a^n=f^nm_2$ and $m_1=f^nm_1$ for any $n\in \mathbb{Z}^+$. If $n=1$ there is nothing to show. Assume $n>1$ then $m_1=fm_1=ffm_1=\cdots =\underbrace{ff\cdots f\,}_\text{$n$-times}m_1=f^nm_1$, $m_1=fm_1=ffm_2=\cdots =\underbrace{ff\cdots f\,}_\text{$n$-times}m_2=f^nm_2$  and $m_1=m_2a=m_2aa=\cdots=m_2\underbrace{aa\cdots a\,}_\text{$n$-times}=m_2a^n$. The converse is clear.\\
(1) $\Leftrightarrow$ (3) and (1) $\Leftrightarrow$ (4) can be proved similar to the proof of (1) $\Leftrightarrow$ (2).
\end{proof}

The following corollary is a direct consequence of Theorem
\ref{nlidenklikler} which is a new characterization of the Mitsch
order on rings.

\begin{cor}
Let $R$ be a ring and $a, b \in R$. The following are
equivalent.
\begin{enumerate}
    \item $a\leq_{M} b$.
    \item There exist $x,y \in R$ such that $a=bx^n=y^nb$ and $a=y^na$,
for any $n\in \mathbb{Z}^+$.
    \item There exist $x,y \in R$ such that $a=bx^n=y^nb$ and $a=ax^n$,
for any $n\in \mathbb{Z}^+$.
    \item There exist $x,y \in R$ such that $a=bx^n=y^nb$ and
$a=y^na=ax^n$, for any $n\in \mathbb{Z}^+$.
\end{enumerate}
\end{cor}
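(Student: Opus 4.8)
The plan is to specialize Theorem \ref{nlidenklikler} to the right regular module $M = R_R$. First I would recall the standard ring isomorphism $S = \mathrm{End}_R(R_R) \cong R$ given by $f \mapsto f(1_R)$: concretely, every $f \in S$ acts as left multiplication $f(r) = f(1_R)\,r$, and since $(f \circ g)(1_R) = f(g(1_R)) = f(1_R)\,g(1_R)$, composition in $S$ corresponds to the product in $R$. In particular, iterating $f$ shows that $f^n$ corresponds to left multiplication by $(f(1_R))^n$, which is the fact that will make the exponents line up.

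Next I would match the two notions of Mitsch order. For $M = R_R$, writing $m_1 = a$, $m_2 = b$, an endomorphism $f \in S$ as left multiplication by some $y \in R$, and the module scalar as $x \in R$, the module definition (Definition \ref{Definition_Mitsch}, item (1) of Proposition \ref{mitschdenklik}) reads $a = bx = yb$ together with $a = ya$. This is exactly the condition in Definition \ref{Definition_Mitsch_on rings}, so $a \leq_M b$ in the ring $R$ coincides with $a \leq_M b$ in the module $R_R$. This gives the equivalence with item (1) of the corollary.

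Then I would translate items (2)--(4) of Theorem \ref{nlidenklikler} under the same dictionary. The right module action $m_2 a^n$ becomes the ring product $b x^n$, while the endomorphism expressions $f^n m_2$, $f^n m_1$, and the scalar expression $m_1 a^n$ become $y^n b$, $y^n a$, and $a x^n$ respectively. Hence the module statements ``$m_1 = m_2 a^n = f^n m_2$ with $m_1 = f^n m_1$'' (respectively with $m_1 = m_1 a^n$, respectively with both) translate verbatim into the ring statements listed in (2), (3), (4). The equivalence of (1)--(4) in the corollary then follows immediately from Theorem \ref{nlidenklikler}.

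The only delicate point, and indeed the only content beyond bookkeeping, is keeping the directions of the two actions straight: left multiplications must arise from $S$ (so that iterating the endomorphism $f$ produces the powers $y^n$), whereas right multiplications arise from the scalar ring $R$ (producing the powers $x^n$). Once the isomorphism $S \cong R$ and its compatibility with taking powers is pinned down, every clause matches on the nose, so I expect no genuine obstacle here; the work is entirely in setting up the identification cleanly.
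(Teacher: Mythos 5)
Your proposal is correct and matches the paper's approach: the paper states this corollary as a direct consequence of Theorem \ref{nlidenklikler}, obtained precisely by specializing to the module $M = R_R$, which is what you do. Your explicit verification that $\mathrm{End}_R(R_R) \cong R$ via $f \mapsto f(1_R)$ respects composition and powers, and that the module Mitsch order on $R_R$ coincides with Definition \ref{Definition_Mitsch_on rings}, simply fills in the routine identification the paper leaves implicit.
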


In the next result, we give some basic properties of the Mitsch order.

\begin{prop} \label{mitschann}
Let $M$ be a module and $m_1, m_2\in M$. If $m_1\leq_{M} m_2$, then
the following hold.
\begin{enumerate}
    \item $l_{S}(m_2)\subseteq l_{S}(m_1)$.
    \item $r_{R}(m_2)\subseteq r_{R}(m_1)$.
    \item $m_1R\subseteq m_2R$.
    \item $Sm_1\subseteq Sm_2$.
\end{enumerate}
\end{prop}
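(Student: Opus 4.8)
The plan is to read off all four inclusions directly from the defining equations of the Mitsch order, so no inductive or structural argument will be needed. By Proposition \ref{mitschdenklik}, the hypothesis $m_1\leq_M m_2$ supplies $f\in S$ and $a\in R$ with $m_1=m_2a=fm_2$ and $m_1=fm_1=m_1a$. In fact I only ever need the two presentations $m_1=m_2a$ and $m_1=fm_2$: each of the four inclusions follows from substituting one of them and then invoking the bimodule axioms for the left $S$-/right $R$-module $M$.

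For part (1), I would take $g\in l_S(m_2)$, so $gm_2=0$, and use the presentation $m_1=m_2a$ to compute $gm_1=g(m_2a)=(gm_2)a=0$, whence $g\in l_S(m_1)$. For part (2), I would take $r\in r_R(m_2)$, so $m_2r=0$, and use $m_1=fm_2$ to compute $m_1r=(fm_2)r=f(m_2r)=0$, whence $r\in r_R(m_1)$. For part (3), every element of $m_1R$ has the form $m_1r=(m_2a)r=m_2(ar)\in m_2R$, so $m_1R\subseteq m_2R$. For part (4), every element of $Sm_1$ has the form $gm_1=g(fm_2)=(gf)m_2\in Sm_2$, so $Sm_1\subseteq Sm_2$.

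There is no genuine obstacle here; the statement is a bookkeeping exercise in choosing which presentation of $m_1$ to substitute. The one point that deserves a moment's care is in part (1), where the step $(gm_2)a=g(m_2a)$ relies on the fact that the endomorphisms in $S$ are right $R$-linear, i.e. on the $(S,R)$-bimodule compatibility $(gx)r=g(xr)$ for $x\in M$; the remaining manipulations use only associativity of the left $S$-action and of the right $R$-action. I would also note in passing that (1) and (3) are exactly the conclusions drawn from the equation $m_1=m_2a$, while (2) and (4) are drawn from $m_1=fm_2$, which makes the symmetry of the four statements transparent.
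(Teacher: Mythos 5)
Your proof is correct and coincides with what the paper intends: the paper's own proof is just the remark ``It is straightforward,'' and your direct substitution argument is exactly that straightforward verification. One minor quibble with your commentary (not with the proof itself): part (2) also uses the $(S,R)$-bimodule compatibility, since $(fm_2)r=f(m_2r)$ is the identity $(fx)r=f(xr)$ with $x=m_2$, so it is not true that only part (1) relies on it; the computations themselves are all valid.
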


\begin{proof}
It is straightforward.
\end{proof}

We now give an example to show that the converse of Proposition
\ref{mitschann} need not be true.

\begin{ex} \rm
Let $R=M_2(\mathbb Z)$, and $M=R_R$. Consider
       $m_1=\begin{bmatrix}
       3 & 0 \\
       0 & 3
       \end{bmatrix}$,
       $m_2=\begin{bmatrix}
       -1 ~& 2 \\
        2 &-1
       \end{bmatrix}$ and
       $a=f=\begin{bmatrix}
       1 & 2 \\
       2 & 1
       \end{bmatrix}$
       $\in M$. Since $m_1=m_2a=fm_2$ we have $l_{S}(m_2)\subseteq l_{S}(m_1)$,
       $r_{R}(m_2)\subseteq r_{R}(m_1)$, $m_1R\subseteq m_2R$, $Sm_1\subseteq
       Sm_2$.
       On the other hand, it can be seen that $m_1\nleq_{M}
       m_2$.
\end{ex}

 Let $M$ be a module and $N$ a submodule of $M$. It is
natural to ask if it is possible to extend the Mitsch order from
$N$ to $M$ and vice versa. The following result answers these
questions.

\begin{prop}
 Let $M$ be a module, $N$ a submodule of $M$ and $m_1, m_2\in N$.
 \begin{enumerate}
     \item If $N$ is a direct summand of $M$ and $m_1\leq_{M} m_2$ on $N$, then $m_1\leq_{M} m_2$ on $M$.
     \item  If $N$ is fully invariant in $M$ and $m_1\leq_{M} m_2$ on $M$, then $m_1\leq_{M} m_2$ on $N$.
 \end{enumerate}
\end{prop}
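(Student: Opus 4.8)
The plan is to handle the two parts separately, noting first a feature common to both: the scalar $a \in R$ that witnesses $m_1 = m_2a$ is the same whether we regard $m_1, m_2$ as elements of $N$ or of $M$, since the $R$-action does not change with the ambient module. Consequently the equation $m_1 = m_2a$ transfers for free in either direction, and all the work concerns the endomorphism component of the Mitsch witness. So in each part I would isolate this component and show that the structural hypothesis on $N$ is exactly what lets it pass between $\mathrm{End}_R(N)$ and $S = \mathrm{End}_R(M)$.

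For part (1), I would write $M = N \oplus K$ and introduce the canonical $R$-linear projection $\pi \colon M \to N$ and inclusion $\iota \colon N \to M$. By hypothesis there are $g \in \mathrm{End}_R(N)$ and $a \in R$ with $m_1 = m_2a = gm_2$ and $m_1 = gm_1$. The key construction is to extend $g$ to $M$ by setting $f = \iota g \pi \in S$, which acts as $g$ on $N$ and annihilates $K$. Since $m_1, m_2 \in N$ give $\pi m_i = m_i$, I would then compute $fm_2 = gm_2 = m_1$ and $fm_1 = gm_1 = m_1$, which together with $m_1 = m_2a$ verifies $m_1 \leq_{M} m_2$ on $M$ using the witnesses $f$ and $a$.

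For part (2), full invariance of $N$ means $h(N) \subseteq N$ for every $h \in S$. Given $m_1 \leq_{M} m_2$ on $M$, there are $f \in S$ and $a \in R$ with $m_1 = m_2a = fm_2$ and $m_1 = fm_1$. Here the construction is dual: I would take the restriction $g = f|_N$, which by full invariance maps $N$ into $N$ and is $R$-linear, hence $g \in \mathrm{End}_R(N)$. Again using $m_1, m_2 \in N$, one gets $gm_2 = fm_2 = m_1$ and $gm_1 = fm_1 = m_1$, establishing $m_1 \leq_{M} m_2$ on $N$.

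Both verifications are essentially immediate once the endomorphism component is separated out, so there is no real obstacle so much as a point to emphasize: neither direction holds without its hypothesis. A direct summand is precisely what supplies the projection $\pi$ needed to \emph{extend} an endomorphism of $N$ to one of $M$, while full invariance is precisely the condition allowing an endomorphism of $M$ to \emph{restrict} to an endomorphism of $N$. This asymmetry is why the two statements pair up as extension from $N$ to $M$ under the summand hypothesis and restriction from $M$ to $N$ under full invariance.
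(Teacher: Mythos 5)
Your proof is correct and follows essentially the same route as the paper: in part (1) your extension $f=\iota g\pi$ is exactly the paper's $g\oplus 0_K$, and in part (2) both arguments restrict the endomorphism of $M$ to $N$ via full invariance. The observation that the ring element $a$ transfers unchanged is implicit in the paper's proof as well, so there is nothing genuinely different to compare.
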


\begin{proof} (1)
Let $m_1\leq_{M} m_2$ on $N$. Then there exist $f \in$ End$_R(N)$
and $a \in R$ such that $m_1=m_2a=fm_2$ and $m_1=fm_1$. If $N$ is
a direct summand of $M$, then there exists a submodule $K$ of $M$
such that $M=N\oplus K$. Hence $f\oplus0_K\in S$ and
$m_1=m_2a=(f\oplus0_K)m_2$ and $m_1=(f\oplus0_K)m_1$. Therefore
$m_1\leq_{M} m_2$ on $M$.\\
(2) Since $m_1\leq_{M} m_2$ on $M$, there exist $f\in S$ and $a\in R$ such that $m_1=fm_2=m_2a$ and $m_1=fm_1$. The submodule $N$ being fully invariant yields that $f{\mid _N}\in$ End$_R(N)$. This completes the proof.
\end{proof}

 Let $M$ be a module and $N$ a submodule of $M$. In the
next result, we determine under what conditions the Mitsch order
on $M$ can be described on $M/N$.

\begin{prop}
 Let $M$ be a module and $N$ a submodule of $M$. If $N$ is fully invariant,
  then Mitsch order on $M$ can be defined on $M/N$.
\end{prop}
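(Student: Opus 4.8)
The plan is to show that the canonical projection $\pi : M \to M/N$, $m \mapsto \overline{m}:=m+N$, is order-preserving, i.e. that $m_1\leq_M m_2$ in $M$ forces $\overline{m_1}\leq_M \overline{m_2}$ in $M/N$; this is exactly what it means for the Mitsch order on $M$ to be defined on $M/N$ via the quotient map. Since the Mitsch order on $M/N$ is formulated relative to End$_R(M/N)$ and $R$, the first task is to manufacture, out of a given $f\in S$, an endomorphism of $M/N$, and then transport the defining equalities of Definition \ref{Definition_Mitsch} through $\pi$.

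The key step, and the only place the hypothesis is used, is the construction of this induced endomorphism. Because $N$ is fully invariant, every $f\in S$ satisfies $f(N)\subseteq N$. Hence the rule $\overline{f}(\overline{m})=\overline{f(m)}$ is well-defined: if $\overline{m}=\overline{m'}$, then $m-m'\in N$, so $f(m)-f(m')=f(m-m')\in N$ and therefore $\overline{f(m)}=\overline{f(m')}$. A direct check shows $\overline{f}$ is additive and $R$-linear, so $\overline{f}\in$ End$_R(M/N)$, and by construction $\pi(fm)=\overline{f}(\pi(m))$ for all $m\in M$. This yields a well-defined assignment $S\to$ End$_R(M/N)$, $f\mapsto\overline{f}$.

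With this in hand the argument is a routine transport. Suppose $m_1\leq_M m_2$ in $M$; by Definition \ref{Definition_Mitsch} there exist $f\in S$ and $a\in R$ with $m_1=m_2a=fm_2$ and $m_1=fm_1$. Applying $\pi$ and using its $R$-linearity together with $\pi(fm)=\overline{f}(\pi(m))$, we get $\overline{m_1}=\overline{m_2}\,a=\overline{f}\,\overline{m_2}$ and $\overline{m_1}=\overline{f}\,\overline{m_1}$. These are precisely the defining conditions for $\overline{m_1}\leq_M \overline{m_2}$ in $M/N$, witnessed by $\overline{f}\in$ End$_R(M/N)$ and $a\in R$.

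The only genuine obstacle is the well-definedness of $\overline{f}$, which is resolved exactly by full invariance; all the remaining equalities are obtained by applying the $R$-linear map $\pi$ to the relations holding in $M$. Note that the hypothesis cannot be dropped: for an arbitrary submodule $N$ a general $f\in S$ need not send $N$ into $N$, so $\overline{f}$ would fail to be defined, and there would be no canonical endomorphism of $M/N$ to witness the descended order.
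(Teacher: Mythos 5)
Your proposal is correct and follows essentially the same route as the paper: both proofs use full invariance of $N$ to show that each $f\in S$ induces a well-defined endomorphism $\overline{f}$ of $M/N$ (the paper calls it $g$, defined by $g(m+N)=fm+N$), and then transport the defining equalities $m_1=m_2a=fm_2$, $m_1=fm_1$ through the quotient to conclude $m_1+N\leq_M m_2+N$. The only cosmetic difference is that you phrase the transport step via the canonical projection $\pi$ being order-preserving, which is exactly what the paper verifies by hand.
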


\begin{proof}
Let $M$ be a module, $N$ a fully invariant submodule of $M$ and
$m_1, m_2\in M$. Assume that $m_1\leq_{M} m_2$ on $M$. Then there
exists $f\in S$ and $a\in R$ such that $m_1 = m_2a=fm_2$ and $m_1
= fm_1$. Define $g\in$ End$_R(M/N)$ as $g(m+N)=fm+N$ for $m+N\in
M/N$. Hence $g$ is well defined map on $M/N$. In fact, $m + N = m_1
+ N$ implies $m - m_1\in N$. Since $N$ fully invariant, $f(m -
m_1) = f(m) - f(m_1) \in f(N)\subseteq N$. Thus $g(m + N) = f(m)
+ N = f(m_1) + N$. Since $f$ is an endomorphism of $M$, $g$ is
also an endomorphism of $M/N$. Let $m_1+N$,  $m_2+N \in M/N$ with
$m_1\leq_{M} m_2$ on $M$. By assumption $m_1 = m_2a=fm_2$ and $m_1
= fm_1$. Then $m_1 + N = m_2a + N = (m_2 + N)a$ and $m_1 + N =
fm_2 + N = g(m_2 + N)$. $m_1 + N = fm_1 + N = g(m_1 + N)$. It
entails that $m_1 + N\leq_{M} m_2 + N$ on $M/N$.
\end{proof}

\begin{cor}
Let $R$ be a ring and $I$ an ideal of $R$. Then the Mitsch order
on $R$ can be defined on $R/I$.
\end{cor}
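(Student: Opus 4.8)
The plan is to obtain the Corollary as the special case $M=R_R$ of the preceding Proposition. First I would recall that the submodules of the right regular module $R_R$ are precisely the right ideals of $R$, so that $I$, being in particular a right ideal, is a submodule of $R_R$, and the factor module $R_R/I$ is just $R/I$ as a right $R$-module. The task then reduces to checking that the two-sided ideal $I$ is \emph{fully invariant} as a submodule of $R_R$, so that the Proposition applies verbatim.

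The key step, and the only substantive one, is the identification of the endomorphism ring. I would use the standard isomorphism $S=\mathrm{End}_R(R_R)\cong R$ given by left multiplication: every $R$-endomorphism $f$ of $R_R$ has the form $f(r)=f(1_R)r$, so $f$ is determined by $x:=f(1_R)\in R$ and acts as $r\mapsto xr$. Under this identification, a submodule $N$ of $R_R$ is fully invariant exactly when $xN\subseteq N$ for every $x\in R$, i.e. when $N$ is also a left ideal. Since a submodule of $R_R$ is automatically a right ideal, the fully invariant submodules of $R_R$ are precisely the two-sided ideals of $R$. In particular $I$ is fully invariant in $R_R$.

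With this in hand the conclusion is immediate: applying the Proposition to $M=R_R$ and $N=I$ shows that the Mitsch order on $R_R$ descends to $R_R/I=R/I$. It remains only to note that the Mitsch order on the module $R_R$ coincides with the Mitsch order on the ring $R$ of Definition \ref{Definition_Mitsch_on rings}; indeed, writing $f\in S$ as left multiplication by $x\in R$ in Definition \ref{Definition_Mitsch} turns the relations $m_1=m_2a=fm_2$, $m_1=fm_1$ into $a_1=b\,a=x\,b$, $a_1=x\,a_1$, which is exactly the ring-theoretic condition. I do not anticipate a genuine obstacle here; the whole content lies in correctly matching ``fully invariant submodule of $R_R$'' with ``two-sided ideal of $R$'' via the left-multiplication description of $S$.
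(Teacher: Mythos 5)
Your proposal is correct and is exactly the argument the paper intends: the corollary is stated as an immediate consequence of the preceding proposition, obtained by taking $M=R_R$ and observing (via the identification $\mathrm{End}_R(R_R)\cong R$ by left multiplication) that a two-sided ideal $I$ is precisely a fully invariant submodule of $R_R$. Your additional check that the module-theoretic Mitsch order on $R_R$ agrees with Definition~\ref{Definition_Mitsch_on rings} is a worthwhile detail the paper leaves implicit, but it does not change the route.
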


\section{Orders Versus The Natural Partial Order On Modules}

In this section, we investigate the relations between the Mitsch
order and the other well-known orders, namely, the minus order,
the Jones order, the direct sum order and the space pre-order on
modules.

Let $M$ be a module and $m_1, m_2\in M$. In
\cite{UHHM-comm}, recall that $m_1\leq^{-} m_2$ if there exists $\varphi \in
M^*$ such that $m_1=m_1\varphi m_1$, $m_1\varphi=m_2 \varphi$, and
$\varphi m_1=\varphi m_2$. The relation $\leq^{-}$ is called the
\emph{minus order} on $M$.

\begin{prop} \label{minussamitsch}
Let $M$ be a module and $m_1, m_2\in M$. If $m_1\leq^{-} m_2$, then $m_1\leq_{M}
m_2$. The converse holds if $M$ is a regular module.
\end{prop}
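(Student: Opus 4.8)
The plan is to prove the two implications separately; regularity is needed only for the converse, which is the substantive direction.

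For the forward implication I would produce the Mitsch data explicitly. Suppose $m_1\leq^- m_2$ via $\varphi\in M^*$, so $m_1=m_1\varphi m_1$, $m_1\varphi=m_2\varphi$ and $\varphi m_1=\varphi m_2$. Setting $f:=m_1\varphi\in S$ (the endomorphism $x\mapsto m_1\varphi(x)$) and $a:=\varphi m_1\in R$, the three Mitsch equations fall out by substitution: $fm_1=m_1\varphi m_1=m_1$; next $m_2a=m_2\varphi m_1=m_1\varphi m_1=m_1$ using $m_1\varphi=m_2\varphi$; and $fm_2=m_1\varphi m_2=m_1\varphi m_1=m_1$ using $\varphi m_1=\varphi m_2$. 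Thus $m_1=m_2a=fm_2$ and $m_1=fm_1$, so $m_1\leq_M m_2$, and no hypothesis on $M$ is used.

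For the converse, assume $M$ is regular and $m_1\leq_M m_2$. Using the equivalent form (3) of Proposition \ref{mitschdenklik} I may fix $f\in S$ and $a\in R$ with $m_1=m_2a=fm_2$ and $m_1=fm_1=m_1a$. Regularity of $M$ then supplies some $\psi\in M^*$ with $m_1=m_1\psi m_1$; this is the only point at which the hypothesis enters, and it is what guarantees that a candidate functional exists at all. The crux of the argument is the right choice of functional, and I would set
$$\varphi:=a\psi f\in M^*,\qquad \text{i.e.}\qquad \varphi(x)=a\,\psi(fx),$$
which lies in $M^*$ because $f$ and $\psi$ are $R$-linear.

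It then remains to check the three conditions defining $\leq^-$ for this $\varphi$, and here the Mitsch relations do all the work. The regularity condition follows from $m_1\varphi m_1=m_1a\,\psi(fm_1)=m_1\psi m_1=m_1$, using $m_1a=m_1$ and $fm_1=m_1$. The remaining two identities come from the same collapsing: $m_1a=m_1$ and $m_2a=m_1$ turn both $m_1\varphi$ and $m_2\varphi$ into the single endomorphism $x\mapsto m_1\psi(fx)$, giving $m_1\varphi=m_2\varphi$; while $fm_1=m_1=fm_2$ give $\varphi m_1=a\,\psi(fm_1)=a\,\psi(fm_2)=\varphi m_2$. I expect the main obstacle to be locating the form $a\psi f$: it must absorb the left factor $a$ so that the two endomorphism equations close up, absorb the right factor $f$ so that the two scalar equations close up, and still leave $\psi$ in place to preserve regularity. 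Once this choice is made, each verification is a single substitution.
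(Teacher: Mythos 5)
Your proposal is correct. The forward implication is exactly the paper's argument: both take $f=m_1\varphi\in S$ and $a=\varphi m_1\in R$ and verify the three Mitsch equations by substitution, using no hypothesis on $M$. For the converse, however, you take a genuinely different route. The paper does not argue at all; it simply cites \cite[Theorem 5.6]{UHHM-comm}. You instead give a direct, self-contained construction: starting from the Mitsch data $m_1=m_2a=fm_2$, $m_1=fm_1=m_1a$ (legitimately invoking form (3) of Proposition \ref{mitschdenklik}) and a regularity functional $\psi$ with $m_1=m_1\psi m_1$, you define $\varphi=a\psi f\in M^*$ and check the three minus-order conditions; each verification is a one-line substitution, and all three are correct under the paper's module conventions ($M^*$ is a left $R$-, right $S$-bimodule, so $a\psi f$ is well defined and $R$-linear). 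What your approach buys is twofold: it removes the dependence on the external reference, and it isolates precisely where regularity enters --- only to produce $\psi$ for the single element $m_1$. In fact your argument proves the slightly stronger statement that $m_1\leq_M m_2$ together with regularity of the \emph{element} $m_1$ already implies $m_1\leq^- m_2$, which parallels the element-wise hypothesis in Theorem \ref{mitschsejones}(2) and is more general than assuming all of $M$ is regular.
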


\begin{proof}
If $m_1\leq^{-} m_2$, then there exists $\varphi \in
M^*$ such that $m_1=m_1\varphi m_1$, $m_1\varphi=m_2 \varphi$, and
$\varphi m_1=\varphi m_2$. Take $a=\varphi m_1\in R$ and $f=m_1\varphi \in S$. Then $m_1=m_1\varphi m_1=m_2\varphi m_1=m_2a$, $m_1=m_1\varphi m_1=m_1\varphi m_2=fm_2$ and $m_1=m_1\varphi m_1=fm_1$. Therefore $m_1\leq_{M} m_2$. Conversely, if $M$ is a regular module and $m_1\leq_{M}m_2$, then from \cite[Theorem 5.6]{UHHM-comm} we have $m_1\leq^{-} m_2$.
\end{proof}

The following example shows that the regularity
condition in Proposition \ref{minussamitsch} is not superfluous.

\begin{ex} \rm \label{mitschminuscounterex}
Consider $M=\mathbb{Z}_{12}$ as a $\mathbb{Z}$-module. Since
$M^*=0$, $M$ is not a regular module. Let $a=3\in \mathbb{Z}$ and
the homomorphism $f\in S$ defined by $f(\bar{n})=\bar{n}a$ for all
$\bar{n}\in M$ we have $\bar{6}=\bar{2}a=f(\bar{2})$ and
$\bar{6}=f(\bar{6})$ which yields that $\bar{6}\leq_{M} \bar{2}$.
But since $M^*=0$, $\bar{6}\nleq^{-} \bar{2}$.
\end{ex}

In \cite{UHHM-comm}, the Jones order on modules is defined  in the
following way: Let $M$ be a module and $m_1,m_2\in M$. Then we
write $m_1\leq _{J}m_2$ if there exist
 $f^2=f \in S$ and $a^2=a\in R$ such that $m_1 =m_2a=fm_2$. In the
 next result, we obtain the relation between the Mitsch order and
 the Jones order.

\begin{prop} \label{jonessamitsch}
 Let $M$ be a module and $m_1, m_2\in M$. If $m_1\leq _{J}m_2$, then $m_1\leq_{M} m_2$.
\end{prop}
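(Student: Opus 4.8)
The plan is to reuse the very same idempotents that the Jones order already supplies, and to observe that idempotency automatically yields the extra relation the Mitsch order demands. Assume $m_1\leq_J m_2$, so by definition there exist $f^2=f\in S$ and $a^2=a\in R$ with $m_1=m_2a=fm_2$. These two equalities $m_1=m_2a=fm_2$ are exactly the shared half of the Mitsch condition as recorded in form (1) of Proposition \ref{mitschdenklik}, so no work is required for them; the only thing left to establish is the fixed-point relation $m_1=fm_1$.

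First I would compute $fm_1$ directly. Since $m_1=fm_2$, we get $fm_1=f(fm_2)=f^2m_2=fm_2=m_1$, where the middle equality is the single place the hypothesis $f^2=f$ enters. This gives $m_1=fm_1$, and combined with $m_1=m_2a=fm_2$ this is precisely condition (1) of Proposition \ref{mitschdenklik}. Hence $m_1\leq_M m_2$ with the identical witnesses $f\in S$ and $a\in R$, and the proof is complete.

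I do not anticipate any genuine obstacle: the content of the statement is simply that the idempotency carried by the Jones order forces the idempotent-type relation required by the Mitsch order. As a remark, one could instead verify condition (2) of Proposition \ref{mitschdenklik}, namely $m_1=m_1a$, by the symmetric computation $m_1a=(m_2a)a=m_2a^2=m_2a=m_1$ using $a^2=a$; either route suffices, since the three conditions of Proposition \ref{mitschdenklik} are equivalent.
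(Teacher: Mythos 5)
Your proof is correct and matches the paper's own argument: both derive $fm_1 = f^2m_2 = fm_2 = m_1$ from the idempotency of $f$ and conclude via the Mitsch condition with the same witnesses $f$ and $a$. Your closing remark about the symmetric route through $m_1 = m_1a$ using $a^2 = a$ is a valid alternative but adds nothing beyond the paper's reasoning.
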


\begin{proof}
Asume that $m_1\leq _{J}m_2$. Then $m_1 =m_2a=fm_2$ for some $f^2=f \in S$ and $a^2=a\in R$. From $m_1 =fm_2$ we obtain $fm_1 =fm_2=m_1$. Therefore $m_1\leq_{M} m_2$.
\end{proof}

The following example shows that there are Mitsch ordered elements
which are not Jones ordered in a module.

\begin{ex}\rm \label{MitJon} Let $C$ be the commutative semigroup on generators $a$, $b$ and
$c$ with relations $ac = a = cb$ as in \cite[Example 2.9]{Hi}.
Consider the semigroup ring $R = \mathbb Z_2[C]$. Being $ac = a =
cb$
 implies $ab = acb = a^2$. It can be listed the distinct elements of $C$ and $R$ as
$$C = \{a, a^2,\dots, a^n~,\dots;b, b^2,\dots,b^n,\dots;c, c^2,\dots,c^n
,\dots\}$$ $$R = \{\sum_{i}\sum_{j}\sum_{k} a^ib^jc^k\mid a, b,
c\in C\}.$$ It is easy to see that $C$ is idempotent-free and the
idempotents of $R$ are trivial. The commutativity of $C$ and $R$
entails that in the equalities $m_1 = m_2d =fm_2,$ $m_1 = fm_1$ we
replace $m_1$ by $a$, $m_2$ by $b$, $d$ by $c$ and $f$ by $c$ to
get $m_1 = m_2d$ to $a = bc$, $m_1 = fm_2$ to $a = cb$ and $m_1 =
fm_1$ to $a = ca$. Then $m_1 = m_2d = fm_2$,  $m_1 = fm_1$ is $a =
bc = cb,$ $a = ca$. Hence we have $a\leq_M b$. Since $R$ has only
trivial idempotents $\{0, 1\}$, $a\nleq_J b$.
\end{ex}

A natural question is to ask under which conditions Mitsch ordered
elements of a module are Jones ordered.
\begin{thm} \label{mitschsejones} Let $M$ be a module and $m_1, m_2\in M$. Then we have the following.
\begin{enumerate}
\item If $m_1\leq_{M} m_2$ and $r_{R}(m_2)=l_{S}(m_2)=0$, then $m_1\leq _{J}m_2$.
\item If $m_1\leq_{M} m_2$ and $m_1$ is regular, then $m_1\leq _{J}m_2$.
\end{enumerate}
\end{thm}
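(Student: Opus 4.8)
The plan is to treat both parts by the same strategy: begin from the Mitsch data and then \emph{upgrade} the witnesses $f\in S$ and $a\in R$ into idempotents, which is precisely what the Jones order demands. By Proposition \ref{mitschdenklik}, from $m_1\leq_{M} m_2$ I may fix $f\in S$ and $a\in R$ with $m_1=m_2a=fm_2$ and $m_1=fm_1=m_1a$. The Jones order asks for the \emph{same} shape of factorization $m_1=m_2d=em_2$, but now with $e^2=e\in S$ and $d^2=d\in R$. So in both parts the entire task is to produce such idempotents out of $f$ and $a$ (and, in part (2), a regularity witness).

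For part (1), I would argue that the annihilator hypotheses force $f$ and $a$ themselves to be idempotent. Indeed $f^2m_2=f(fm_2)=fm_1=m_1=fm_2$, so $(f^2-f)m_2=0$; since $l_{S}(m_2)=0$ this gives $f^2=f$. Symmetrically $m_2a^2=(m_2a)a=m_1a=m_1=m_2a$, so $m_2(a^2-a)=0$, and $r_{R}(m_2)=0$ yields $a^2=a$. With $f,a$ now idempotent and still satisfying $m_1=m_2a=fm_2$, the definition of the Jones order is met. This part is essentially immediate once one reads off $m_1=fm_1=m_1a$ from Proposition \ref{mitschdenklik}.

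For part (2), I can no longer assume the annihilators vanish, so instead I would use the regularity of $m_1$: fix $\varphi\in M^*$ with $m_1=m_1\varphi m_1$. The idea is to manufacture the idempotents by twisting $f$ and $a$ with $\varphi$. On the left I would set $e=(m_1\varphi)f\in S$; using $fm_1=m_1$, the $R$-linearity of $f$ and $\varphi$, and $m_1\varphi m_1=m_1$, one checks $e^2=e$, while $em_2=m_1\varphi(fm_2)=m_1\varphi m_1=m_1$. On the right I would set $d=a(\varphi m_1)\in R$; here $m_2d=(m_2a)(\varphi m_1)=m_1\varphi m_1=m_1$ is immediate, and idempotency follows once one knows $(\varphi m_1)a=\varphi m_1$. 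These two constructions give $m_1=em_2=m_2d$ with $e,d$ idempotent, i.e. $m_1\leq_{J} m_2$.

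The main obstacle is the right-hand idempotent $d$: unlike part (1), $a$ need not be idempotent and the naive choice $d=a$ fails, so the correct move is to absorb the idempotent $\varphi m_1\in R$ coming from regularity. The key identity making $d^2=d$ work is $(\varphi m_1)a=\varphi(m_1a)=\varphi m_1$, which uses the $R$-linearity of $\varphi$ together with the relation $m_1a=m_1$ supplied by Proposition \ref{mitschdenklik}; combined with $(\varphi m_1)^2=\varphi m_1$ (a restatement of $m_1\varphi m_1=m_1$) it collapses $d^2=a(\varphi m_1)a(\varphi m_1)$ down to $a(\varphi m_1)=d$. The analogous left identity $f(m_1\varphi)=m_1\varphi$ in $S$ handles $e$. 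I expect verifying these idempotency computations to be the only delicate point; everything else is bookkeeping with the relations already recorded.
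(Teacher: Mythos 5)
Your proposal is correct and follows essentially the same route as the paper's own proof: in part (1) you show the Mitsch witnesses $f$ and $a$ are themselves idempotent by cancelling against $m_2$ via the annihilator hypotheses, and in part (2) you build the Jones idempotents $(m_1\varphi)f\in S$ and $a(\varphi m_1)\in R$ from the regularity witness $\varphi$, exactly as the paper does (it only names them $m_1\varphi s$ and $r\varphi m_1$). The key identities you isolate, $f(m_1\varphi)=m_1\varphi$ and $(\varphi m_1)a=\varphi m_1$ coming from $fm_1=m_1=m_1a$, are precisely the ones used there.
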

\begin{proof} (1) Let $m_1\leq_{M} m_2$, then there exist $f \in S$ and $a \in R$ such that $m_1=m_2a=fm_2$ and $m_1=fm_1=m_1a$. Hence $fm_2=fm_1=f^2m_2$ and $m_2a=m_1a=m_1a^2$. Thus $(f-f^2)m_2=0$ and $m_2(a-a^2)=0$. Since $r_{R}(m_2)=l_{S}(m_2)=0$ then $f\in S$ and $a\in R$ are idempotent elements. Therefore $m_1\leq _{J}m_2$.\\
(2) Let $m_1\in M$ be regular and $m_1\leq_{M} m_2$. The regularity of $m_1$ yields there exists $\varphi\in M^*$ such that $m_1 = m_1\varphi m_1$. Then $m_1\varphi\in S$ is an idempotent and $\varphi m_1\in R$ is an idempotent.  Consider $m_1 = m_1\varphi m_1 = (m_2r)(\varphi m_1) = m_2(r\varphi m_1) = m_2e$ where $e = r\varphi m_1$. We claim that $e^2 = e$. In fact $e^2 = (r\varphi m_1)(r\varphi m_1) = (r\varphi)(m_1r)(\varphi m_1) = (r\varphi)(m_1)(\varphi m_1) = r\varphi m_1 = e$. Next we consider $m_1 = m_1\varphi m_1 = (m_1\varphi)(sm_2) = (m_1\varphi s)m_2 = fm_2$ where $m_1\varphi s = f\in S$ and we claim $f^2 = f$. In fact $f^2 = (m_1\varphi s)(m_1\varphi s) = (m_1\varphi) (sm_1)(\varphi s) = (m_1\varphi) (m_1)(\varphi s) = m_1\varphi s = f$. The net result is $m_1 = m_2e$ and $e^2 = e\in R$ and $m_1 = fm_2$ and $f^2 = f\in S$. It follows that $m_1\leq _J m_2$, $m_1 = fm_2 = m_2e =fm_1e = fm_2e$.
\end{proof}

Recall that the direct sum order on modules is defined in
\cite{UHHM-comm} as follows: Let $M$ be a module and $m_1,m_2\in
M$. Then $m_1\leq ^{\oplus}m_2$ if $m_2R=m_1R \oplus (m_2-m_1)R$.

\begin{prop} \label{mitschseoplus}
Let $M$ be a module and $m_1, m_2\in M$. If $m_1\leq _{M}m_2$, then $m_1\leq^{\oplus} m_2$. The converse holds if $M$ is a regular module.
\end{prop}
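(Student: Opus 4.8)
The plan is to prove the two implications separately: the forward direction is driven entirely by the idempotent-like endomorphism $f$ that the Mitsch order hands us, while the converse builds such an endomorphism out of regularity.

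\textbf{Forward direction.} Assume $m_1\leq_M m_2$. By Proposition \ref{mitschdenklik} I fix $f\in S$ and $a\in R$ with $m_1=m_2a=fm_2$ and $m_1=fm_1$. First I would record the sum identity $m_2R=m_1R+(m_2-m_1)R$: the inclusion $\supseteq$ comes from $m_2=m_1+(m_2-m_1)$, and $\subseteq$ from $m_1=m_2a\in m_2R$ together with $m_2-m_1\in m_2R$. The real content is directness, $m_1R\cap(m_2-m_1)R=0$, and here $f$ does all the work. For $x=m_1r=(m_2-m_1)s$ in the intersection, applying $f$ gives $fx=(fm_1)r=m_1r=x$ on one side (using $fm_1=m_1$), and $fx=\bigl(f(m_2-m_1)\bigr)s=0$ on the other, since $f(m_2-m_1)=fm_2-fm_1=m_1-m_1=0$. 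Hence $x=0$, so $m_2R=m_1R\oplus(m_2-m_1)R$, i.e.\ $m_1\leq^{\oplus}m_2$.

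\textbf{Converse.} Now suppose $M$ is regular and $m_1\leq^{\oplus}m_2$, so $m_2R=m_1R\oplus(m_2-m_1)R$. The key point is that regularity of $m_2$ makes $m_2R$ a direct summand of $M$: writing $m_2=m_2\psi m_2$ with $\psi\in M^*$, the map $m_2\psi\in S$ is idempotent (routine check) with image exactly $m_2R$, so $M=m_2R\oplus K$ with $K=\ker(m_2\psi)$. Splicing the given decomposition of $m_2R$ into this yields $M=m_1R\oplus(m_2-m_1)R\oplus K$. I then take $f\in S$ to be the projection of $M$ onto $m_1R$ along $(m_2-m_1)R\oplus K$; this is a bona fide $R$-endomorphism because it is projection onto a direct summand. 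By construction $f^2=f$, $f m_1=m_1$ (as $m_1\in m_1R$) and $f(m_2-m_1)=0$, so $fm_2=fm_1=m_1$. Since $m_1\in m_1R\subseteq m_2R$ we get $a\in R$ with $m_1=m_2a$ for free, and the three relations $m_1=m_2a=fm_2$, $m_1=fm_1$ are exactly the Mitsch order, giving $m_1\leq_M m_2$.

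The main obstacle is the converse, precisely the step that upgrades the purely $R$-module decomposition $m_2R=m_1R\oplus(m_2-m_1)R$ to an honest endomorphism $f$ of all of $M$. Without regularity there is no reason for the internal projection of $m_2R$ onto $m_1R$ to extend to an element of $\operatorname{End}_R(M)$; it is exactly the splitting $M=m_2R\oplus K$ produced by regularity of $m_2$ that lets the projection be defined globally. (Alternatively one could route the converse through the minus order, using the regular-module equivalence of $\leq^-$ and $\leq_M$ recorded in Proposition \ref{minussamitsch}, but the direct projection argument is cleaner and self-contained.)
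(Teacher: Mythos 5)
Your proof is correct, but both halves take a genuinely different route from the paper's. For the forward direction the paper works on the scalar side: writing $m_1=m_2r=m_1r$, it shows that an element $m=m_1a=(m_2-m_1)b$ of the intersection satisfies $m_2(ra+rb-b)=0$, and then invokes the annihilator inclusion $r_{R}(m_2)\subseteq r_{R}(m_1)$ from Proposition \ref{mitschann} to conclude $m=m_1a=0$. You instead work on the operator side, applying the endomorphism $f$ (with $fm_1=m_1$ and $f(m_2-m_1)=0$) directly to the intersection element; this is shorter and bypasses the annihilator lemma entirely. For the converse the paper simply cites \cite[Theorem 5.11]{UHHM-comm} (on regular modules the direct sum order implies the minus order) together with Proposition \ref{minussamitsch}, whereas you give a self-contained construction: regularity of $m_2$ produces the idempotent $m_2\psi\in S$ with image $m_2R$, hence a splitting $M=m_2R\oplus K$, which refines to $M=m_1R\oplus(m_2-m_1)R\oplus K$, and the projection onto $m_1R$ along the other two summands is the required $f$, with $m_1=m_2a$ available because $m_1\in m_2R$. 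Your version buys independence from the external reference at the cost of redoing work that \cite{UHHM-comm} already packages; the alternative route you mention in your closing parenthesis is exactly the paper's proof. One cosmetic slip: in your sum identity the labels $\subseteq$ and $\supseteq$ are transposed ($m_2=m_1+(m_2-m_1)$ gives $m_2R\subseteq m_1R+(m_2-m_1)R$, not $\supseteq$), but since both inclusions are in fact justified, nothing mathematical is missing.
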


\begin{proof}
If $m_1\leq _{M}m_2$, then $m_1R\subseteq m_2R$ and $(m_2-m_1)R\subseteq m_2R$ from Proposition \ref{mitschann}(3). It is clear that $m_2R=m_1R + (m_2-m_1)R$. Let $m\in m_1R\cap(m_2-m_1)R$ then there exists $a,b\in R$ such that $m=m_1a=(m_2-m_1)b$. Since $m_1\leq _{M}m_2$, $m_1=m_1r=m_2r$ for some $r\in R$. Thus $m=m_2ra=(m_2-m_2r)b$. Hence $m_2(ra+rb-b)=0$. From Proposition \ref{mitschann}(1) $m_1(ra+rb-b)=0=m_1a+m_1b-m_1b=m_1a=m$. Therefore $m_1R\cap(m_2-m_1)R=0$ and $m_2R=m_1R \oplus (m_2-m_1)R$. The converse is clear from \cite[Theorem 5.11]{UHHM-comm} and Proposition \ref{minussamitsch}.
\end{proof}

As the following example shows, the condition being regular is not superfluous in Proposition \ref{mitschseoplus}.

\begin{ex}\rm
Let $F$ be a field and $R = \left \{\begin{bmatrix}a&0&b\\0&c&d\\0&0&e\end{bmatrix}\mid a,b,c,d,e\in F\right \}$ be the subring of $U_3(F)$ the ring of $3\times 3$ upper triangular matrices over $F$.\\
Let $m_1 = \begin{bmatrix}0&0&1\\0&0&1\\0&0&0\end{bmatrix}$ and $m_2 = \begin{bmatrix}0&0&1\\0&1&1\\0&0&0\end{bmatrix}$, and so $m_2 - m_1 =  \begin{bmatrix}0&0&0\\0&1&0\\0&0&0\end{bmatrix}$. Then $m_1R = \left\{\begin{bmatrix}0&0&a\\0&0&a\\0&0&0\end{bmatrix}\mid a\in F\right\}$, $m_2R = \left\{\begin{bmatrix}0&0&b\\0&c&d\\0&0&0\end{bmatrix}\mid b,c,d\in F\right\}$ and \linebreak $(m_2 - m_1)R = \left\{\begin{bmatrix}0&0&0\\0&s&t\\0&0&0\end{bmatrix}\mid s,t\in F\right\}$.
Since $F$ is a field, it is easy to check that $m_2R = m_1R\oplus (m_2 - m_1)R$. So $m_1\leq^{\oplus}m_2$. On the other hand, we claim that there is not an element $f$ in $R$ such that $m_1 = fm_2$. Otherwise, let $f = \begin{bmatrix}a&0&b\\0&z&u\\0&0&e\end{bmatrix}\in R$ such that $\begin{bmatrix}0&0&1\\0&0&1\\0&0&0\end{bmatrix} = \begin{bmatrix}a&0&b\\0&z&u\\0&0&e\end{bmatrix}\begin{bmatrix}0&0&1\\0&1&1\\0&0&0\end{bmatrix} = \begin{bmatrix}0&0&a\\0&z&z\\0&0&0\end{bmatrix}$. It entails that $1 = 0$. This contradiction shows that $m_1\nleq_M m_2$.
\end{ex}

In \cite{MITRA}, the space pre-order on complex matrices was
introduced by Mitra. Recently, the space pre-order is extended to
modules in \cite{UHHM-deb}. Let $M$ be a module and $m_1,m_2\in
M$. Then $m_1\leq _{S}m_2$ if $Sm_1\subseteq Sm_2$ and
$m_1R\subseteq m_2R$.

\begin{prop} \label{mitschsespace}
 Let $M$ be a module and $m_1, m_2\in M$. If $m_1\leq_{M} m_2$, then $m_1\leq _{S}m_2$.
\end{prop}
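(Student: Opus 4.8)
The plan is to unwind the definition of the Mitsch order and show directly that each of the two defining conditions of the space pre-order follows. Recall that $m_1 \leq_S m_2$ means exactly that $Sm_1 \subseteq Sm_2$ and $m_1 R \subseteq m_2 R$. But both of these containments have already been established as consequences of the Mitsch order in Proposition \ref{mitschann}: part (4) gives $Sm_1 \subseteq Sm_2$ and part (3) gives $m_1 R \subseteq m_2 R$. So the entire proof reduces to invoking the relevant parts of Proposition \ref{mitschann}.

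Concretely, I would argue as follows. Assume $m_1 \leq_M m_2$. By Proposition \ref{mitschann}(3) we have $m_1 R \subseteq m_2 R$, and by Proposition \ref{mitschann}(4) we have $Sm_1 \subseteq Sm_2$. These are precisely the two conditions in the definition of $\leq_S$, so $m_1 \leq_S m_2$, as desired.

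There is essentially no obstacle here: the statement is an immediate corollary of Proposition \ref{mitschann}, and the only care required is matching the two containments in the definition of the space pre-order to the correct parts of that proposition. For completeness, one could alternatively give the direct verification from the definition: writing $m_1 = m_2 a = f m_2$ with $m_1 = f m_1$ for some $f \in S$ and $a \in R$, the equation $m_1 = m_2 a$ shows $m_1 \in m_2 R$ hence $m_1 R \subseteq m_2 R$, while $m_1 = f m_2$ shows $m_1 \in S m_2$ hence $S m_1 \subseteq S m_2$. Either route closes the argument in a single line, so the proof should simply read \textbf{It is straightforward from Proposition \ref{mitschann}(3) and (4).}

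One remark worth noting is that this proposition, together with the earlier results, completes the implications recorded in the diagram in the introduction: the Mitsch order implies the space pre-order, just as it is implied by the Jones order (Proposition \ref{jonessamitsch}) and relates to the direct sum order (Proposition \ref{mitschseoplus}). Since the converse direction would require producing witnesses $f$ and $a$ from mere inclusion of cyclic submodules, and the example following Proposition \ref{mitschann} already shows such inclusions do not force $m_1 \leq_M m_2$, I would expect the converse to fail in general, so no attempt at a reverse implication is warranted here.
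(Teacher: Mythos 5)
Your proposal is correct and follows exactly the paper's own route: the paper's proof is simply ``It is clear from Proposition \ref{mitschann},'' relying on parts (3) and (4) of that proposition to supply the two containments $m_1R\subseteq m_2R$ and $Sm_1\subseteq Sm_2$ defining the space pre-order. Your additional direct verification from the definition is a harmless elaboration of the same idea.
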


\begin{proof}
It is clear from Proposition \ref{mitschann}.
\end{proof}

One may suspect that the converse of Proposition
\ref{mitschsespace} holds. But the following example erases the
possibility.

\begin{ex} \rm
Let $R=\begin{bmatrix}
       \mathbb{Z} & 2\mathbb{Z}\\
       2\mathbb{Z} & \mathbb{Z}
       \end{bmatrix}$, and $M=R_R$. Consider
       $m_1=\begin{bmatrix}
       9 & 6 \\
       6 & 3
       \end{bmatrix}$,
       $m_2=\begin{bmatrix}
       1 & 2 \\
       2 & 1
       \end{bmatrix}$,
       $a=\begin{bmatrix}
       1 & 0 \\
       4 & 3
       \end{bmatrix}$ and
       $f=\begin{bmatrix}
       1 & 4 \\
       0 & 3
       \end{bmatrix}$
       $\in M$. Since $m_1=m_2a=fm_2$ we have $m_1\leq_{S} m_2$. It can be seen that $m_1\nleq_{M} m_2$.
\end{ex}


We close this section by obtaining the relation between the minus order and the Jones order. So the following result completes the diagram which is given at the end of Introduction. 

\begin{prop} \label{minussajones}
Let $M$ be a module and $m_1, m_2\in M$. If $m_1\leq^{-} m_2$, then $m_1\leq _{J}m_2$.
\end{prop}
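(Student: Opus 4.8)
The plan is to unwind the definition of the minus order and manufacture, from the single homomorphism $\varphi$ it supplies, a pair of idempotents of the required form. Suppose $m_1\leq^{-} m_2$, so there is $\varphi\in M^*$ with $m_1=m_1\varphi m_1$, $m_1\varphi=m_2\varphi$, and $\varphi m_1=\varphi m_2$. I will set $f=m_1\varphi\in S$ and $a=\varphi m_1\in R$, exactly the two candidate elements already exploited in the proof of Proposition \ref{minussamitsch}. The Jones order requires $m_1=m_2a=fm_2$ together with $f^2=f$ and $a^2=a$, so there are two things to check: that this $f,a$ produce $m_1$ in the two prescribed ways, and that they are genuinely idempotent.

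First I would verify the two factorizations. For $fm_2$ one computes $fm_2=(m_1\varphi)m_2=m_1(\varphi m_2)=m_1(\varphi m_1)=m_1\varphi m_1=m_1$, using the defining relation $\varphi m_1=\varphi m_2$ and then $m_1=m_1\varphi m_1$. Symmetrically, $m_2a=m_2(\varphi m_1)=(m_2\varphi)m_1=(m_1\varphi)m_1=m_1\varphi m_1=m_1$, this time invoking $m_1\varphi=m_2\varphi$. So both $m_1=m_2a$ and $m_1=fm_2$ hold, which is precisely the displayed requirement in the definition of $\leq_J$.

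The remaining and slightly more delicate step is idempotency, though it too reduces to the single relation $m_1=m_1\varphi m_1$. For $f$ I would write $f^2=(m_1\varphi)(m_1\varphi)=(m_1\varphi m_1)\varphi=m_1\varphi=f$, and for $a$ likewise $a^2=(\varphi m_1)(\varphi m_1)=\varphi(m_1\varphi m_1)=\varphi m_1=a$. Both collapse by associativity once one recognizes the subexpression $m_1\varphi m_1$ and replaces it by $m_1$. With $f^2=f\in S$, $a^2=a\in R$, and $m_1=m_2a=fm_2$ all in hand, the definition of the Jones order is satisfied and $m_1\leq_J m_2$.

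I do not anticipate a genuine obstacle here: the whole argument is an exercise in bracketing the associative products correctly, and the idempotent witnesses are forced on us by the structure $m_1\varphi m_1$. The only point demanding a little care is keeping track of which of the two compatibility relations ($m_1\varphi=m_2\varphi$ versus $\varphi m_1=\varphi m_2$) is used in which factorization, since swapping them would break the computation; beyond that, the result is immediate and in fact strengthens Proposition \ref{minussamitsch} by landing in the Jones order rather than merely the Mitsch order.
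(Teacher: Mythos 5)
Your proof is correct, and every step is legitimate: for $f=m_1\varphi\in S$ and $a=\varphi m_1\in R$, the identities $fm_2=m_1$, $m_2a=m_1$, $f^2=f$ and $a^2=a$ all follow from $m_1=m_1\varphi m_1$, $m_1\varphi=m_2\varphi$, $\varphi m_1=\varphi m_2$ and the associativity of the standard Morita context $(R,M^*,M,S)$, which is exactly what the definition of $\leq_J$ demands. The difference from the paper is one of presentation rather than substance: the paper disposes of the proposition in a single line by citing \cite[Lemma 5.2]{UHHM-comm}, which asserts that $m_1\leq^- m_2$ already produces idempotents $f\in S$, $a\in R$ with $m_1=fm_1=fm_2=m_1a=m_2a$. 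Your argument unpacks what that lemma must contain: the idempotent witnesses are the canonical ones $m_1\varphi$ and $\varphi m_1$ (the same elements used in the paper's proof of Proposition \ref{minussamitsch}, where idempotency was not needed and hence not checked), and the single relation $m_1=m_1\varphi m_1$ is what collapses their squares. What your version buys is self-containedness --- no appeal to the external reference --- and it makes visible that the regularity of $m_1$ built into the minus order is precisely the mechanism producing the idempotents, in line with the Remark in Section 4 of the paper that a regular element $m=m\varphi m$ always yields idempotents $\varphi m\in R$ and $m\varphi\in S$.
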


\begin{proof}
 Let $M$ be a module and $m_1, m_2\in M$. If $m_1\leq^{-} m_2$,
 then there exist $f^2=f\in S, a^2=a\in R$ such that
$m_1=fm_1=fm_2=m_1a=m_2a$, by \cite[Lemma 5.2]{UHHM-comm}.
\end{proof}

The next example shows that the converse of Proposition \ref{minussajones} need not be true.
\begin{ex}\em
Let $n$ be a positive integer and consider the $\mathbb Z$-module $\mathbb Z_n$. Then the dual module $\mathbb Z_n^*=0$. Let $0\neq m\in \mathbb Z_n$. 
On the one hand, since $\mathbb Z_n^*=0$, we have $m\nleq^- m$. On the other hand, clearly, $m\leq_J m$.
\end{ex}


\section{Lattice Properties of the Mitsch Order}

In this section, we study lattice properties of the Mitsch order
on modules. Hence we present various characterizations of the
Mitsch order as an application. We characterize the maximal
elements of modules with respect to the Mitsch order.
We investigate the compatibility of the  Mitsch order  with the addition and multiplication. We give examples to show that the Mitsch order is not compatible with the addition and multiplication in general.

Let $R$ and $S$ be rings, $_RV_S$ and $_SW_R$ be bimodules. Then
the 4-tuple $(R,V,W,S)$ is said to be a \emph{Morita context} if
there exist products $V \times W \rightarrow R$ defined by
$(v,w)\mapsto vw$ and $W \times V \rightarrow S$ defined by
$(w,v)\mapsto wv$. With these requirements, $\begin{bmatrix}R&
V\\W&S\end{bmatrix}$ is an associative ring using matrix
operations and it is called a \emph{Morita context ring}
\cite{morita}.

\begin{rem}{\rm Let $M$ be a module. Then $_SM_R$ is a bimodule.
Also, $M^*$ is a right $S$-module via composition of maps, and
$M^*$ becomes a left $R$-module as follows: For $\varphi \in M^*$
and $r\in R$ define $r \varphi \in M^*$ by $(r \varphi)x=r(\varphi
x)$ for all $x\in M$. Hence we have $_SM_R$ and $_R(M^*)_S$
bimodules. Also, if $m\in M$ and $\varphi \in M^*$, then $\varphi
m \in R$ and dually $m\varphi \in S$ with $(m\varphi)x=m(\varphi
x)\equiv m\varphi x$ for all $x\in M$. If $m\in M$ is regular,
then $m=m\varphi m$ for some $\varphi\in M^*$, so we have
idempotent elements $\varphi m \in R$ and $m \varphi \in S$. It is
easy to verify that $(R,M^*,M,S)$ is a Morita context, called
\textit{the standard context} of the module $M$.}\end{rem}

In \cite{Unit}, Chen et al. defined \textit{left invertible} and
\textit{right invertible} elements of a module by using the Morita
context as follows. Let $(R,M^*,M,S)$ be a standard context of the
module $M$. Then
\begin{equation*}
  m\in M~~\text{is called}~~\begin{cases}
    {\textit left ~invertible}~ \text{if} ~\varphi m=1_R ~\text{for some}~ \varphi\in M^{*},\\
    {\textit right ~ invertible}~ \text{if}~ m\varphi=1_S~ \text{for some}~ \varphi\in M^{*}.
  \end{cases}
\end{equation*}

\begin{thm} \label{maxicin} Let $M$ be a module and $m_1, m_2\in M$. If $m_1\leq_{M} m_2$ then the following hold.
\begin{enumerate}
    \item If $m_1$ is left invertible, then $m_1=m_2$.
    \item If $m_1$ is right invertible, then $m_1=m_2$.
    \item If $m_2\in m_1R$, then $m_1=m_2$.
    \item If $m_2\in Sm_1$, then $m_1=m_2$.
    \item If $m_2\in N$ for a submodule $N$ of $M$, then $m_1\in N$.
    \item $m_2-m_1\leq_{M} m_2$.
\end{enumerate}
\end{thm}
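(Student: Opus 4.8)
The plan is to fix, once and for all, the data supplied by $m_1\leq_M m_2$: by Proposition \ref{mitschdenklik} there are $f\in S$ and $a\in R$ with $m_1=m_2a=fm_2$ and, simultaneously, $m_1=fm_1=m_1a$. Every one of the six assertions will be read off from these four equations together with the extra hypothesis of each item. Items (1), (2), (5) and (6) reduce to short computations, whereas (3) and (4) require a direct-sum decomposition; I expect (4) to be the only genuine obstacle, since the decomposition it needs is a ``left-hand'' one that is not among the results already established.

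For (1), let $\varphi\in M^*$ with $\varphi m_1=1_R$. Applying $\varphi$ to the identity $m_1=m_1a$ and using right $R$-linearity gives $1_R=\varphi m_1=\varphi(m_1a)=(\varphi m_1)a=a$, whence $m_1=m_2a=m_2$. For (2), let $\varphi\in M^*$ with $m_1\varphi=1_S$. Working in the standard Morita context and using $fm_1=m_1$, I compute $f=f(m_1\varphi)=(fm_1)\varphi=m_1\varphi=1_S$, so that $m_1=fm_2=m_2$.

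For (3), Proposition \ref{mitschseoplus} yields $m_1\leq^{\oplus}m_2$, that is, $m_2R=m_1R\oplus(m_2-m_1)R$. If $m_2\in m_1R$ then $m_2R\subseteq m_1R$, and since always $m_1R\subseteq m_2R$ by Proposition \ref{mitschann}(3), we get $m_1R=m_2R$; the first summand then exhausts $m_2R$, forcing $(m_2-m_1)R=0$ and hence $m_1=m_2$. For (4) the plan is to prove the left analogue $Sm_2=Sm_1\oplus S(m_2-m_1)$ by hand. The sum is clear, and for the intersection I take $x=gm_1=h(m_2-m_1)$ with $g,h\in S$ and multiply on the right by $a$: since $m_1a=m_1$, the first expression gives $xa=x$, while $(m_2-m_1)a=m_2a-m_1a=m_1-m_1=0$ makes the second give $xa=0$, so $x=0$. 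With this decomposition in hand, $m_2\in Sm_1$ forces $Sm_1=Sm_2$ (using $Sm_1\subseteq Sm_2$ from Proposition \ref{mitschann}(4)), which collapses the decomposition exactly as in (3) and gives $m_1=m_2$. Establishing this left-hand decomposition is the step I expect to be the crux of the whole theorem.

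Finally, (5) is immediate: $m_1=m_2a$ lies in $N$ because $m_2\in N$ and $N$ is a submodule. For (6) I would set $h=1_S-f\in S$ and $c=1_R-a\in R$ and verify the defining equations of Proposition \ref{mitschdenklik}(1) for $m_2-m_1\leq_M m_2$: indeed $m_2c=m_2-m_2a=m_2-m_1$ and $hm_2=m_2-fm_2=m_2-m_1$, and, since $f(m_2-m_1)=fm_2-fm_1=m_1-m_1=0$, also $h(m_2-m_1)=m_2-m_1$, which is precisely the third required identity.
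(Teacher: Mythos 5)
Your proposal is correct, but on items (3) and (4) it takes a genuinely different route from the paper. Items (1), (2), (5), (6) are essentially the paper's own arguments: the paper proves (1) by applying the left inverse to $m_1=m_1a$ exactly as you do, notes (2) is the dual, and proves (6) with $h=1_S-f$, $c=1_R-a$ just as you do. For (3) and (4), however, the paper uses one-line computations relying only on associativity of the bimodule actions: for (3), writing $m_2=m_1b$ and using $fm_1=fm_2=m_1$ gives $m_2=m_1b=f m_1 b=fm_2=m_1$; for (4), dually, $m_2=gm_1$ together with $m_1a=m_2a=m_1$ gives $m_2=gm_1=g(m_1a)=(gm_1)a=m_2a=m_1$. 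You instead route (3) through Proposition~\ref{mitschseoplus} (the decomposition $m_2R=m_1R\oplus(m_2-m_1)R$) and, for (4), prove from scratch the left-sided analogue $Sm_2=Sm_1\oplus S(m_2-m_1)$, killing the intersection by right multiplication by $a$. Your arguments are valid: the collapse ``$A=B\oplus C$ and $A=B$ imply $C=0$'' is sound, and since $1_R\in R$ and $1_S\in S$, the vanishing of $(m_2-m_1)R$ or $S(m_2-m_1)$ does give $m_1=m_2$. What your route buys is the decomposition $Sm_2=Sm_1\oplus S(m_2-m_1)$, a left-module analogue of Proposition~\ref{mitschseoplus} that the paper never states and which has some independent interest. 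What it costs is economy: no direct sums are needed at all, and in particular item (4), which you flagged as the crux of the whole theorem, is a two-line calculation in the paper obtained by simply associating $g(m_1a)=(gm_1)a$.
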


\begin{proof}
(1) Let $m_1\leq_{M} m_2$ and $m_1$ is left invertible.
Then there exist $a\in R$ and $f\in M^*$ such that $m_1=m_1a=m_2a$ and $fm_1=1_R$.
Hence $a=fm_1a=fm_1=1_R$. Therefore $m_1=m_2$.\\
(2) It can be proved similar to the proof of (1).\\
(3) Let $m_1\leq_{M} m_2$ and $m_2\in m_1R$.
 Then there exist $b\in R$ and $f\in S$ such that $m_2=m_1b$ and $fm_1=fm_2=m_1$.
 Thus $m_2=m_1b=fm_1b=fm_2=m_1$.\\
(4) It can be proved similar to the proof of (3).\\
(5) It is clear.\\
(6) Let $m_1\leq_{M} m_2$. Then $m_1 =m_2a=fm_2$ and
$m_1 =fm_1$ for some $f \in S$ and $a\in R$.
Therefore $m_2-m_1=m_2(1-a)=(1-f)m_2$ and $m_2-m_1 =(1-f)(m_2-m_1)$.
Hence $m_2-m_1\leq_{M} m_2$.
\end{proof}

Note that zero is the minimum element of the poset $(M,\leq_{M})$.
In the following, we determine the maximal elements of the module
with respect to the Mitsch order.

\begin{thm} \label{maxelement}
Let $M$ be a module and $m\in M$. Then $m$ is a maximal element of the the poset $(M,\leq_{M})$ if any of the following holds.
\begin{enumerate}
    \item m is left invertible.
    \item m is right invertible.
    \item For every $b\in r_{R}(m)$, $b+1$ is invertible.
    \item For every $g\in l_{S}(m)$, $g+1$ is invertible.
    \item $r_{R}(m)=0$.
    \item $l_{S}(m)=0$.
\end{enumerate}
\end{thm}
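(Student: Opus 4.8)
The plan is to establish each of the six sufficient conditions for maximality independently, showing in every case that whenever $m \leq_M m'$ for some $m' \in M$, we are forced to conclude $m = m'$. The unifying strategy is to use the equivalent formulation of the Mitsch order from Proposition \ref{mitschdenklik}(3): if $m \leq_M m'$, then there exist $f \in S$ and $a \in R$ with $m = m'a = fm'$ and $m = fm = ma$. The task is then to leverage the hypothesis in each case to deduce $f$ acts as the identity on $m'$ (or that $a$ does), thereby collapsing $m'$ onto $m$.

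For conditions (1) and (2), I would simply invoke Theorem \ref{maxicin}: parts (1) and (2) of that theorem already assert that a left or right invertible $m_1$ satisfying $m_1 \leq_M m_2$ must equal $m_2$, which is exactly the maximality statement here. So (1) and (2) follow immediately. For conditions (3) and (4), the idea is that invertibility of $b+1$ for all $b \in r_R(m)$ controls the right-annihilator behavior. Starting from $m \leq_M m'$ with $m = m'a = ma$, I would observe that $m(1-a) = 0$, so $-a + 1 = 1 - a$ lies in a position where, writing $b$ for an appropriate annihilator element, the hypothesis forces invertibility and hence $a$ must act invertibly; combined with $m = m'a$ this should yield $m' = m'a a^{-1} = m a^{-1}$, pushing $m'$ into $mR$, at which point Theorem \ref{maxicin}(3) gives $m = m'$. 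Conditions (5) and (6) are the cleanest: if $r_R(m) = 0$, then from $m(1-a) = 0$ we get $1 - a \in r_R(m) = 0$, so $a = 1$ and thus $m' = m'a = m' \cdot 1$ combined with $m = m'a = m'$ finishes it; symmetrically $l_S(m) = 0$ forces $f = 1$ via $(1-f)m = 0$.

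The main obstacle I anticipate is in parts (3) and (4), where the condition is phrased in terms of $b + 1$ being \emph{invertible} for every $b$ in the annihilator, rather than the annihilator being trivial. Here I must be careful to correctly identify which annihilator element to plug in: from $m = ma$ I get $m(a - 1) = 0$, so $a - 1 \in r_R(m)$, meaning I should apply the hypothesis to $b = a - 1$, giving that $b + 1 = a$ is invertible in $R$. Once $a$ is a unit, the relation $m = m'a$ yields $m' = m a^{-1} \in mR$, and then maximality follows from Theorem \ref{maxicin}(3). The symmetric argument for (4) uses $f - 1 \in l_S(m)$, the invertibility of $f$ in $S$, and $m' = f^{-1} m \in Sm$, closing via Theorem \ref{maxicin}(4). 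The delicate point throughout is bookkeeping the direction of the annihilator and correctly matching the hypothesis $b+1$ to the element $a-1$ or $f-1$; this is the step where sign and side errors are most likely, so I would verify each substitution explicitly before concluding.
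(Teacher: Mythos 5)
Your proposal is correct and follows essentially the same route as the paper: parts (1) and (2) by citing Theorem \ref{maxicin}, parts (3) and (4) by observing that $m=ma$ gives $a-1\in r_R(m)$ (resp.\ $f-1\in l_S(m)$) so the hypothesis makes $a$ (resp.\ $f$) invertible, and parts (5) and (6) as the trivial special cases. The only cosmetic difference is that you close (3) and (4) by placing $m'$ in $mR$ (resp.\ $Sm$) and invoking Theorem \ref{maxicin}(3)--(4), whereas the paper cancels the unit $a$ directly from $m'a=ma$; both finishes are valid.
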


\begin{proof}
(1) and (2) follow from Theorem \ref{maxicin}.\\
(3) Let $n\in M$ and $m\leq_{M} n$, then there exists $a \in R$ such that $m=na=ma$.
Hence $m(a-1)=0$.
Therefore $a\in R$ is invertible and $na=ma$ implies that $n=m$.
Thus m is a maximal element of the poset $(M,\leq_{M})$.\\
(4) It can be proved similar to the proof of (3).\\
(5) and (6) are clear from (3) and (4), respectively.
\end{proof}

 The following corollary is a direct consequence of
Theorem \ref{maxicin} and Theorem \ref{maxelement}.

\begin{cor} Let $M$ be module.
\begin{enumerate}
    \item If $M_R$ or ${}_SM$ is a faithful module, then every nonzero element of $M$ is maximal with respect to $\leq_{M}$.
    \item If $M_R$ or ${}_SM$ is a cyclic module generated by m, then $m$ is a maximal element of $M$ with respect to $\leq_{M}$.
    \item All torsion-free elements of $M$ are maximal with respect to $\leq_{M}$.
\end{enumerate}
\end{cor}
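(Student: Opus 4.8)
The plan is to reduce each of the three items to the two preceding theorems. Two of them are essentially immediate. For item (3), a torsion-free element $m$ of $M$ is by definition a nonzero element with $r_R(m)=0$, so Theorem \ref{maxelement}(5) applies directly and $m$ is maximal. For item (2), suppose $M_R$ is cyclic, say $M=mR$; if $n\in M$ satisfies $m\leq_M n$, then $n\in M=mR$, and Theorem \ref{maxicin}(3) forces $m=n$, so $m$ is maximal. The left-module case ${}_SM=Sm$ is dual, using that any $n$ above $m$ lies in $Sm$ together with Theorem \ref{maxicin}(4).

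Item (1) I would likewise route through Theorem \ref{maxelement}, invoking part (5) when $M_R$ is faithful and part (6) when ${}_SM$ is faithful. Unwinding the definition of $\leq_M$ via Proposition \ref{mitschdenklik}, from a nonzero $m$ and some $n$ with $m\leq_M n$ we obtain $f\in S$ and $a\in R$ with $m=na=fn$ and $m=fm=ma$; then $m(1-a)=0$, i.e. $1-a\in r_R(m)$, and the aim is to deduce $a=1$ and hence $m=na=n$. When $r_R(m)=0$ this is exactly Theorem \ref{maxelement}(5), and the $l_S$-side is symmetric.

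The hard part will be this last item, and the difficulty is conceptual rather than computational. Faithfulness of $M_R$ asserts only $r_R(M)=\bigcap_{x\in M}r_R(x)=0$, which is strictly weaker than the element-wise $r_R(m)=0$ needed above; on the $S$-side one even has $l_S(M)=0$ automatically, so a literal module-level reading cannot be intended. The key step is therefore to fix the correct sense of ``faithful'' — most naturally that \emph{every} nonzero element of $M$ has trivial annihilator — under which item (1) collapses onto Theorem \ref{maxelement}(5) and (6). I would adopt this element-wise hypothesis (or otherwise bridge $r_R(M)=0$ to $r_R(m)=0$), and I expect pinning down this point to be the only genuine obstacle in the corollary.
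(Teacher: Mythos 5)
Your reductions are exactly the paper's: its entire proof is the one-line remark that the corollary is a direct consequence of Theorems \ref{maxicin} and \ref{maxelement}, and your handling of items (2) and (3) --- cyclic via Theorem \ref{maxicin}(3),(4), torsion-free (read as $r_R(m)=0$) via Theorem \ref{maxelement}(5) --- is precisely that reduction spelled out.

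Your reservation about item (1) is not a gap in your argument but a genuine flaw in the statement, and your diagnosis is the correct resolution: under the standard definition of faithful, (1) is false, not merely hard to prove. On the $S$-side, $l_S(M)=0$ holds for every module (an endomorphism annihilating all of $M$ is the zero map), so ${}_SM$ is always faithful; item (1) would then claim that every nonzero element of every module is maximal, which the paper's own Example \ref{mitschminuscounterex} refutes, since $\bar{6}\leq_{M}\bar{2}$ in $\mathbb{Z}_{12}$ with $\bar{0}\neq\bar{6}\neq\bar{2}$. On the $R$-side, take $M=\mathbb{Z}_{12}\oplus\mathbb{Z}$ over $R=\mathbb{Z}$: it is faithful, yet with $a=3$ and $f(x,y)=(3x,0)$ one checks $(\bar{6},0)=(\bar{2},0)a=f(\bar{2},0)$ and $(\bar{6},0)=f(\bar{6},0)$, so $(\bar{6},0)\leq_{M}(\bar{2},0)$ and the nonzero element $(\bar{6},0)$ is not maximal. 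Hence the element-wise hypothesis you propose ($r_R(m)=0$, resp.\ $l_S(m)=0$, for every nonzero $m$) is the only reading under which (1) follows from Theorem \ref{maxelement}(5),(6), and is evidently what the authors intended; with that reading adopted, your proof is complete.
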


One may suspect that a module $M$ becomes a lattice with respect
to the Mitsch order. But the following remark reveals that this is not the case.

\begin{rem}\rm Let $M$ be a regular module and $m_1, m_2\in M$.
By Proposition \ref{minussamitsch},  $m_1\leq_{M} m_2$ implies $m_1\leq^{-} m_2$. Assume that $m_1$ and $m_2$ have an upper bound $m\in M$ in the lattice of the Mitsch order on $M$. Then $m$ is also an upper bound of $m_1$ and $m_2$ in the lattice of minus order on $M$. It is known by \cite[Example 3.6]{ACHHU} that the minus order on a regular module $M$ need not form a lattice on $M$. Therefore $M$ need not be a lattice with respect
to the Mitsch order in general.
\end{rem}

In the following example, we show that the Mitsch order is not
always compatible with the scalar multiplication.

\begin{ex} \label{compatiblemultiplication} \rm
Let $R=\begin{bmatrix}
       \mathbb{Z} & \mathbb{Z}_6 \\
       0 & \mathbb{Z}_6
       \end{bmatrix}$ and $M=R_R$. Consider
       $m_1=\begin{bmatrix}
       2 & \bar{2} \\
       0 & \bar{3}
       \end{bmatrix}$ and
       $m_2=\begin{bmatrix}
       2 & \bar{0} \\
       0 & \bar{1}
       \end{bmatrix}$ in $M$. Then for
       $a=\begin{bmatrix}
       1 & \bar{1} \\
       0 & \bar{3}
       \end{bmatrix}$,
       $f=\begin{bmatrix}
       1 & \bar{2} \\
       0 & \bar{3}
       \end{bmatrix} \in R $ we have $m_1=m_2a=fm_2$ and $m_1=fm_1$.
        Hence $m_1\leq_{M} m_2$. Set
       $g=\begin{bmatrix}
       2 & \bar{0} \\
       0 & \bar{3}
       \end{bmatrix} \in R $. Since there is no solution for the equation $\overline{3x}=4$
        in $\mathbb{Z}_6$, we obtain
       $gm_1\nleq_{M} gm_2$.

\end{ex}

We have the following result for which the Mitsch order is
compatible with the scalar multiplication. Recall that a ring $R$ is called a {\it right (resp., left) duo ring} if every right (resp., left) ideal of $R$ is an ideal; equivalently $Ra\subseteq aR~(\mbox{resp.,}~ aR\subseteq Ra)$ for every $a\in R$.

\begin{prop} \label{mitschinvertible} Let $M$ be a module and $m_1, m_2\in M$.  Then we have the following.
\begin{enumerate}
    \item $m_1\leq_{M} m_2$ if and only if $gm_1\leq_{M} gm_2$
    for every invertible element $g \in S$.
     \item $m_1\leq_{M} m_2$ if and only if $m_1b\leq_{M} m_2b$
    for every invertible element $b \in R$.
    \item If $m_1\leq_{M} m_2$ and $b\in C(R)$, then $m_1b\leq_{M} m_2b$.
    \item If $m_1\leq_{M} m_2$ and $g\in C(S)$, then $gm_1\leq_{M} gm_2$.
    \item If $m_1\leq_{M} m_2$ and $R$ is a right duo ring, then $m_1b\leq_{M} m_2b$ for all $b\in R$.
    \item If $m_1\leq_{M} m_2$ and $S$ is a left duo ring, then $gm_1\leq_{M} gm_2$ for all $g\in S$.

\end{enumerate}\end{prop}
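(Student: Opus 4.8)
The plan is to prove each of the six items directly from the definition of the Mitsch order, since all six assertions reduce to verifying that an appropriate pair of elements $(f, a)$ witnessing $m_1 \leq_M m_2$ can be transported through scalar multiplication. Recall that $m_1 \leq_M m_2$ means there exist $f \in S$ and $a \in R$ with $m_1 = m_2 a = f m_2$ and $m_1 = f m_1$; by Proposition \ref{mitschdenklik} we may also assume $m_1 = m_1 a$. The entire proof is a sequence of short computations, so I do not expect a single hard obstacle; rather, the work is in choosing the correct conjugated witnesses in each case.

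\textbf{Items (1) and (2).} For the forward direction of (1), suppose $m_1 \leq_M m_2$ with witnesses $f \in S$, $a \in R$, and let $g \in S$ be invertible. I would propose the witnesses $gfg^{-1} \in S$ and the \emph{same} $a \in R$ for the relation $gm_1 \leq_M gm_2$. Then $(gm_2)a = g(m_2 a) = g m_1$, and $(gfg^{-1})(gm_2) = gf m_2 = g m_1$, and $(gfg^{-1})(gm_1) = gfm_1 = gm_1$, so all three defining equations hold. The converse follows by applying the forward direction to $g^{-1}$ and the elements $gm_1, gm_2$, using invertibility of $g$. Item (2) is the mirror image: for $b \in R$ invertible I would keep $f \in S$ and conjugate on the right, using witnesses $f$ and $b^{-1} a b$, checking $(m_2 b)(b^{-1}ab) = m_2 a b = m_1 b$, $f(m_2 b) = (fm_2)b = m_1 b$, and $f(m_1 b) = (fm_1)b = m_1 b$; the converse again follows by applying the result to $b^{-1}$.

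\textbf{Items (3) and (4).} When $b \in C(R)$ is central but not necessarily invertible, I can no longer conjugate, but centrality lets me commute $b$ past $a$. Starting from $m_1 = m_2 a = f m_2$ and $m_1 = f m_1 = m_1 a$, I multiply on the right by $b$: the witnesses $f \in S$ and $a \in R$ work for $m_1 b \leq_M m_2 b$ because $(m_2 b) a = (m_2 a) b = m_1 b$ (here I only need $b$ to commute with $a$, which centrality guarantees), while $f(m_2 b) = (fm_2)b = m_1 b$ and $f(m_1 b) = (fm_1)b = m_1 b$ are automatic since $f$ acts on the left. Item (4) is the dual statement with $g \in C(S)$ and multiplication by $g$ on the left, keeping witnesses $f$ and $a$, and using that $g$ commutes with $f$ to verify $(gf)(gm_2)$-type identities via $g f = f g$.

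\textbf{Items (5) and (6).} These handle arbitrary $b \in R$ under the duo hypothesis, which is exactly the device that replaces centrality or invertibility. If $R$ is right duo, then $Rb \subseteq bR$, so from $m_1 = m_2 a$ I would use the duo condition to rewrite $ab = b a'$ for some $a' \in R$; then $(m_2 b) a' = m_2 (b a') = m_2 (a b) = (m_2 a) b = m_1 b$, and keeping $f$ on the left gives $f(m_2 b) = m_1 b$ and $f(m_1 b) = m_1 b$, so $f$ and $a'$ witness $m_1 b \leq_M m_2 b$. Item (6) is dual: if $S$ is left duo then $Sg \subseteq gS$, allowing me to rewrite $gf = f' g$ for some $f' \in S$ and verify $f'(g m_2) = g(f m_2) = g m_1$ together with the right-multiplication equation $(g m_2) a = g m_1$, which passes through unchanged. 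The only point requiring care in (5) and (6) is confirming that the duo-produced element ($a'$ or $f'$) simultaneously satisfies all three defining equations and not merely the first; this is the step I would write out most carefully, though it too reduces to direct substitution.
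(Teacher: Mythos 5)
Your proposal is correct and takes essentially the same route as the paper: in each item you transport the witnesses of $m_1\leq_M m_2$ (conjugating by the invertible element in (1)--(2), keeping $f,a$ and commuting in (3)--(4), and using the duo condition to rewrite $ab$ resp.\ $gf$ in (5)--(6)), which is exactly what the paper does, with the paper even using the same conjugate $gfg^{-1}$ in (1) and the same rewriting $ab=bc$ in (5). One small slip worth fixing: in item (6) the paper's definition of left duo gives $gS\subseteq Sg$, not $Sg\subseteq gS$ as you state --- fortunately the rewriting $gf=f'g$ that you actually use is precisely what $gS\subseteq Sg$ licenses, so the computation itself is sound; just correct the stated inclusion.
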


\begin{proof}
(1) Assume that $m_1\leq_{M} m_2$. Then there exist $f \in S$ and $a \in R$ such that $m_1=m_2a=fm_2$ and $m_1=fm_1$. Let $g \in S$ be invertible. Then $gm_1=gm_2a=gfm_2$ and $gm_1=gfm_1$. Since $g$ is invertible, $gm_1 =gm_2a=gfg^{-1}gm_2$. Therefore $gm_1\leq_{M} gm_2$. Conversely, if $gm_1\leq_{M} gm_2$ for an invertible $g\in S$, then $g^{-1}gm_1\leq_{M} g^{-1}gm_2$ and so $m_1\leq_{M} m_2$.\\
(2) It can be proved similar to the proof of (1).\\
(3) Assume that $m_1\leq_{M} m_2$. Then there exist $f \in S$ and $a \in R$ such that $m_1=m_2a=fm_2$ and $m_1=fm_1$. Since $b \in C(R)$, $m_1b=m_2ab=m_2ba=fm_2b$ and $m_1b=fm_1b$. Therefore $m_1b\leq_{M} m_2b$.\\
(4) It can be proved similar to the proof of (3).\\
(5) Let $m_1\leq_{M} m_2$ and $b\in R$. Then $m_1=fm_2=m_2a$ and $m_1=m_1a$ for some $f \in S$ and $a \in R$. It follows that $m_1b=fm_2b=m_2ab=m_1ab$. Since $R$ is right duo, there exists $c\in R$ such that $ab=bc$. Thus $m_1b\leq_{M} m_2b$.\\
(6) It can be proved similar to the proof of (5).
\end{proof}

In Example \ref{compatiblemultiplication}, we show that the Mitsch
order is not compatible with the scalar multiplication. Following
example shows that the Mitsch order also is not compatible with
addition.

\begin{ex} \rm
Let $R=M_2(\mathbb{Q})$ and $M=R_R$. Consider
       $m_1=\begin{bmatrix}
       2 & 5 \\
       0 & 0
       \end{bmatrix}$,
       $m_2=\begin{bmatrix}
       1 & 5 \\
       1 & 0
       \end{bmatrix}$,
       $f=\begin{bmatrix}
       1 & 1 \\
       0 & 0
       \end{bmatrix}$ and
       $a=\begin{bmatrix}
       0 & 0 \\
       \frac{2}{5} & 1
       \end{bmatrix}$
       $\in M$. Since $m_1=m_2a=fm_2$ and $m_1=fm_1$ we have $m_1\leq_{M} m_2$ and $0\leq_{M} m_2$. Since there is no $b\in R$ such that $(m_2+m_2)b=m_1b=m_1$ we have
       $m_1\nleq_{M} m_2+m_2$.
\end{ex}

We give the following result in which cases the Mitsch order is
compatible with the addition.

\begin{prop}
Let $M$ be a module and $m_1, m_2\in M$. Then the following statements are equivalent.
\begin{enumerate}
    \item $m_1\leq_{M} m_2$.
    \item $nm_1\leq_{M} m_2$ for all $n\in \mathbb{Z}$.
    \item $nm_1\leq_{M} nm_2$ for all $n\in \mathbb{Z}$.
\end{enumerate}
\end{prop}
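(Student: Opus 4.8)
The plan is to prove the three statements equivalent by a short cycle of implications, the guiding principle being that the integer action on $M$ is additive and commutes both with every $f\in S$ and with right multiplication by every $a\in R$; consequently a pair of witnesses $(f,a)$ for one Mitsch relation can often be reused, after scaling, for another. First I would record the two cheap implications. Since $1\cdot m_i=m_i$, specializing the quantified $n$ to $n=1$ in either (2) or (3) returns $m_1\leq_{M} m_2$ verbatim; hence $(2)\Rightarrow(1)$ and $(3)\Rightarrow(1)$ are immediate, and the whole content of the proposition sits in the two forward implications.

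For $(1)\Rightarrow(3)$ I would reuse the witnesses unchanged. Assuming $m_1=m_2a=fm_2$ and $m_1=fm_1$ for some $f\in S$, $a\in R$, the additivity of $f$ together with the commutativity of the $\mathbb{Z}$-action with right multiplication by $a$ gives $nm_1=n(m_2a)=(nm_2)a$, $nm_1=n(fm_2)=f(nm_2)$, and $nm_1=n(fm_1)=f(nm_1)$. Thus the same $f$ and $a$ witness $nm_1\leq_{M} nm_2$, uniformly for every $n\in\mathbb{Z}$, the sign of $n$ playing no role. No new endomorphism or ring element has to be constructed, which is exactly what makes this direction clean, and it is the analogue of the scaling arguments already used in Proposition~\ref{mitschinvertible}(3)--(4).

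The step I expect to be the real obstacle is $(1)\Rightarrow(2)$, where the source is scaled but the target $m_2$ is left fixed. The natural candidates are $a'=na$ and $f'=nf$, and these do deliver the first two equalities, $m_2a'=nm_1=f'm_2$. The trouble lies in the fixing condition $nm_1=f'(nm_1)$ demanded by Proposition~\ref{mitschdenklik}: with $f'=nf$ one computes $f'(nm_1)=nf(nm_1)=n\cdot nf(m_1)=n^{2}m_1$, which coincides with $nm_1$ only when $(n^{2}-n)m_1=0$. Since this is not automatic, I would need either to produce a genuinely different witness $f'$ sending $m_2$ to $nm_1$ while fixing $nm_1$, or to isolate the hypothesis under which the discrepancy $(n^{2}-n)m_1$ vanishes (for instance a torsion condition on $m_1$ of the relevant order). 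Pinning down precisely what makes $(1)\Rightarrow(2)$ succeed, and confirming that the fixing condition can be met without merely rescaling the witness that already worked for $(1)\Rightarrow(3)$, is the part of the argument I anticipate requiring the most care.
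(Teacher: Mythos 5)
Your reductions are sound as far as they go: $(2)\Rightarrow(1)$ and $(3)\Rightarrow(1)$ follow by specializing $n=1$, and $(1)\Rightarrow(3)$ works with the witnesses reused unchanged, since $f(nm)=nf(m)$ and $(nm)a=n(ma)$ for all $n\in\mathbb{Z}$. For comparison, the paper's own proof of this proposition consists of the single sentence ``It is clear,'' so the authors give no argument at all; presumably they made exactly the rescaling slip you were careful to avoid.

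The step you flagged, $(1)\Rightarrow(2)$, is not merely delicate: it is false, so no choice of witnesses can repair it, and the proposition as printed is incorrect. Take $R=\mathbb{Z}$ and $M=\mathbb{Z}$ as a module over itself, so that $S=\mathrm{End}_{\mathbb{Z}}(\mathbb{Z})\cong\mathbb{Z}$ acts by multiplication. For nonzero $m_1$, the condition $m_1=fm_1$ forces $f=1_S$, and then $m_1=fm_2=m_2$; hence on $\mathbb{Z}$ the Mitsch order restricted to nonzero elements is equality. Now let $m_1=m_2=1$ and $n=2$. Then $m_1\leq_M m_2$ and $nm_1\leq_M nm_2$ both hold by reflexivity, but $2\not\leq_M 1$, so (1) and (3) hold while (2) fails. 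Your obstruction is exactly what is visible here: any candidate $f'$ with $f'm_2=2$ must be multiplication by $2$, and then $f'(2)=4\neq 2$, i.e. $(n^2-n)m_1\neq 0$. The salvageable statement is the equivalence $(1)\Leftrightarrow(3)$, which you did prove; item (2) can only be recovered under extra hypotheses guaranteeing $(n^2-n)m_1=0$ for all $n$ (as happens, for instance, in the paper's $\mathbb{Z}_{12}$ example, where $n\bar{6}\in\{\bar{0},\bar{6}\}$ for every $n$). So your decision to withhold assent to $(1)\Rightarrow(2)$ was correct, and the defect lies in the paper's statement, not in your argument.
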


\begin{proof}
It is clear.
\end{proof}

\begin{prop}\label{epi-duo}
Let $M$ be a module and $m\in M$. 
\begin{enumerate}
    \item If $mR$ is a fully invariant submodule of $M$, then $fm\leq_{M} m$ for every $f^2=f\in S$.
    \item If $mR$ is a homomorphic image of $M$, then $me\leq_{M} m$ for every $e^2=e\in R$.
\end{enumerate}
\end{prop}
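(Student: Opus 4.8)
The plan is to verify the defining equalities of $\leq_M$ directly from Proposition \ref{mitschdenklik}, choosing the endomorphism and the ring element explicitly so that idempotency disposes of the ``fixed-point'' equations and only one genuine membership condition survives in each part; that surviving condition is exactly what the structural hypothesis is meant to supply.

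For part (1), set $m_1 = fm$ and $m_2 = m$, and verify condition (1) of Proposition \ref{mitschdenklik} using $f$ itself as the endomorphism. Since $f^2 = f$, the equations $m_1 = f m_2$ (that is, $fm = fm$) and $m_1 = f m_1$ (that is, $f(fm) = f^2m = fm$) hold automatically. The only remaining requirement is an element $a \in R$ with $fm = ma$, i.e. $fm \in mR$. This is precisely the point at which full invariance is used: from $m = m1_R \in mR$ and $f(mR)\subseteq mR$ we get $fm = f(m) \in mR$, which produces the required $a$. Collecting these, $fm = ma = fm$ and $fm = f(fm)$, so $fm \leq_M m$.

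For part (2) the roles of $R$ and $S$ are interchanged. Set $m_1 = me$ and $m_2 = m$, and verify condition (2) of Proposition \ref{mitschdenklik} with the ring element taken to be $a = e$. As $e^2 = e$, the equalities $m_1 = m_2 a$ (that is, $me = me$) and $m_1 = m_1 a$ (that is, $me\cdot e = me^2 = me$) are immediate. The single substantive equation left over is $m_1 = f m_2$, namely the existence of an endomorphism $f \in S$ with $f(m) = me$; equivalently, $me \in Sm$. Once such an $f$ has been found, the remaining relation is free of charge, since $f(me) = f(m)e = (me)e = me$.

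Hence everything reduces, in each part, to one nontrivial membership, and for part (2) this --- producing $f\in S$ with $f(m) = me$ from the hypothesis that $mR$ is a homomorphic image of $M$ --- is the main obstacle. The route I would pursue is to feed an epimorphism $\pi\colon M \twoheadrightarrow mR$ furnished by the hypothesis, together with the inclusion $mR \hookrightarrow M$, into a single $R$-endomorphism of $M$, using that $me = m\cdot e$ already lies in the cyclic submodule $mR$. The delicate step, which I would check with care, is arranging this endomorphism to send $m$ to $me$ \emph{exactly} rather than to some other element of $mR$: the condition $me \in Sm$ is genuinely necessary and cannot be circumvented by any alternative choice of $f$ and $a$, so the whole argument hinges on extracting it from the homomorphic-image data.
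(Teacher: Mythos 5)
Your part (1) is correct and is precisely the paper's own proof: taking the idempotent $f$ itself as the endomorphism makes $fm=f(fm)$ automatic, and full invariance supplies $a\in R$ with $fm=ma$, so condition (1) of Proposition \ref{mitschdenklik} is verified.

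In part (2), the step you defer --- producing $f\in S$ with $fm=me$ from an epimorphism $\pi\colon M\twoheadrightarrow mR$ --- is a genuine gap, and it cannot be filled, because the implication as stated is false. Your reduction is sound: since every condition in Proposition \ref{mitschdenklik} contains the equation $m_1=fm_2$, the membership $me\in Sm$ is unavoidable. But composing $\pi$ with the inclusion $mR\hookrightarrow M$ only yields an endomorphism sending $m$ to $\pi(m)$, an element of $mR$ over which you have no control, and surjectivity of $\pi$ only says $me=\pi(x)$ for \emph{some} $x\in M$, not for $x=m$. Concretely, let $R=M_2(\mathbb{Q})$, $M=R_R$, $m=\left[\begin{smallmatrix}1&0\\0&0\end{smallmatrix}\right]$ and $e=\frac{1}{2}\left[\begin{smallmatrix}1&1\\1&1\end{smallmatrix}\right]$, so that $e^2=e$. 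Then $mR$ is a homomorphic image of $M$ (left multiplication by $m$ maps $M$ onto $mR$; in fact $mR$ is even a direct summand), yet every $f\in S=\mathrm{End}_R(R_R)$ is left multiplication by an element of $R$, so $Sm=Rm$ consists of the matrices with zero second column, while $me=\frac{1}{2}\left[\begin{smallmatrix}1&1\\0&0\end{smallmatrix}\right]$ has nonzero second column. Hence $me\notin Sm$ and $me\nleq_{M}m$. Note that the paper's own proof of (2), which reads ``Similar to the proof of (1)'', glosses over exactly the point you flagged: the hypothesis genuinely dual to full invariance of $mR$ (i.e., $S(mR)\subseteq mR$) is that $Sm$ be closed under the right $R$-action (i.e., $(Sm)R\subseteq Sm$); under that hypothesis $me=(\mathrm{id}_M\,m)e\in Sm$ and your argument finishes immediately, whereas ``$mR$ is a homomorphic image of $M$'' does not deliver this. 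So your suspicion about the delicate step was exactly right: no choice of $\pi$ extracts $me\in Sm$ from the stated hypothesis.
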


\begin{proof} (1)
Assume that $f^2=f\in S$ and $mR$ is fully invariant in $M$. Then $fm=ffm$ and since $fm\in mR$, there exists $a\in R$ such that $fm=ma$. Therefore $fm\leq_{M} m$.\\
(2) Similar to the proof of (1).
\end{proof}

The next result is a direct consequence of Proposition \ref{epi-duo}.
\begin{cor}
Let $M$ be a module. 
\begin{enumerate}
    \item If $M$ is duo, then $fm\leq_{M} m$ for all $m\in M$ and $f^2=f\in S$.
    \item If $M$ is 1-epi-retractable, then $me\leq_{M} m$ for all $m\in M$ and $e^2=e\in R$.
\end{enumerate}
\end{cor}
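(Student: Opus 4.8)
The plan is to read off both statements directly from Proposition \ref{epi-duo}, the point being that each global hypothesis on $M$ forces, for every $m \in M$, exactly the per-element hypothesis needed to invoke that proposition.

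First I would recall the definition of a duo module: $M$ is \emph{duo} precisely when every submodule of $M$ is fully invariant. In particular, for each $m \in M$ the cyclic submodule $mR$ is fully invariant in $M$. Thus the hypothesis of Proposition \ref{epi-duo}(1) is satisfied for every $m$, and that proposition immediately gives $fm \leq_{M} m$ for all $m \in M$ and all idempotents $f^2 = f \in S$. This settles part (1).

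Next, for part (2), I would recall that $M$ is \emph{1-epi-retractable} when every cyclic submodule $mR$ is a homomorphic (epimorphic) image of $M$. Hence for each $m \in M$ the submodule $mR$ is a homomorphic image of $M$, so the hypothesis of Proposition \ref{epi-duo}(2) holds for every $m$. Applying that proposition yields $me \leq_{M} m$ for all $m \in M$ and all idempotents $e^2 = e \in R$, establishing part (2).

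The only thing that genuinely needs checking is that the module-theoretic conditions (being duo, being 1-epi-retractable) really do specialize to the submodule-level hypotheses of Proposition \ref{epi-duo}. Since both conditions are by definition quantified over all submodules (respectively all cyclic submodules), this specialization is automatic, and no real obstacle remains beyond unwinding the two definitions; the corollary is therefore an immediate consequence of Proposition \ref{epi-duo}.
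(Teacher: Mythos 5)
Your proposal is correct and matches the paper exactly: the paper also derives this corollary as a direct consequence of Proposition \ref{epi-duo}, with the duo and 1-epi-retractable hypotheses supplying, for every $m\in M$, precisely the conditions that $mR$ is fully invariant (resp.\ a homomorphic image of $M$). Your unwinding of the two definitions is the whole content of the paper's one-line proof.
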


\end{document}